\DeclareMathOperator{\Erf}{\mathrm{Erf}}
\DeclareMathOperator{\Erfc}{\mathrm{Erfc}}
\numberwithin{equation}{section}
\newtheorem{theorem}{Theorem}[section]
\newtheorem{lemma}[theorem]{Lemma}
\newtheorem{corollary}[theorem]{Corollary}
\theoremstyle{definition}
\theoremstyle{remark}
\newtheorem{remark}[theorem]{Remark}
\newtheorem{example}[theorem]{Example}
\crefname{equation}{}{}
\crefname{enumi}{}{}
\Crefname{enumi}{}{}
\begin{document}
\title{Super-Gaussian Decay of Exponentials:\\ A Sufficient Condition}
\author{Benjamin Hinrichs\thanks{Friedrich Schiller University Jena, Department of Mathematics, Ernst-Abbe-Platz 2, 07743 Jena, Germany\\ Present Affiliation: Paderborn University, Institute of Mathematics, Institute for Photonic Quantum Systems, Warburger Str. 100, 33098 Paderborn, Germany}
	\qquad
	Daan W. Janssen\thanks{University of Leipzig, Institute for Theoretical Physics, Brüderstra\ss e 16, 04103 Leipzig, Germany}
	\qquad
	Jobst Ziebell\thanks{Friedrich Schiller University Jena, Institute for Theoretical Physics, Max-Wien-Platz 1, 07743 Jena, Germany}}
\newcommand{\shortauthors}{B. Hinrichs, D. W. Janssen, J. Ziebell}
\newcommand{\titlename}{Super-Gaussian Decay of Exponentials}
\date{}
\maketitle

\begin{abstract}\noindent
In this article, we present a sufficient condition for the exponential $\exp({-f})$ to have a tail decay stronger than any Gaussian, where $f$ is defined on a locally convex space $X$ and grows faster than a squared seminorm on $X$.
In particular, our result proves that $\exp({-p(x)^{2+\varepsilon}+\alpha q(x)^2})$ is integrable for all $\alpha,\varepsilon>0$ w.r.t. any Radon Gaussian measure on a nuclear space $X$, if $p$ and $q$ are continuous seminorms on $X$ with compatible kernels. This can be viewed as an adaptation of Fernique's theorem and, for example, has applications in quantum field theory.
\end{abstract}

\section{Introduction}
A well-known theorem by Fernique \cite{src:Fernique} states that given a Gaussian measure $\mu$ on a locally convex space $X$ along with a measurable seminorm $q$ on $X$ there is some $\alpha > 0$ such that
\begin{equation}
	\int_X \exp \left[ \alpha q \left( x \right)^2 \right] \mathrm{d} \mu \left( x \right) < \infty \, .
\end{equation}
This is an extremely useful property that has been generalised in many directions, see for example \cite{src:AidaMasudaShigekawa,src:Basse,src:FrizOberhauser}.
In this article, we give a generalisation in a somewhat different direction that was motivated by a problem in quantum field theory:
Suppose one has a regularised free scalar quantum field theory modelled by a Gaussian measure $\mu$ on the space $\mathcal{S} (\mathbb{R}^d)$ of Schwartz functions and an interaction term given by $\lambda \int_{\mathbb{R}^d} \phi^4$ for some $\lambda > 0$.
Then one typically attempts to create a \enquote{counterterm} proportional to $\int_{\mathbb{R}^d} \phi^2$ in order to absorb divergences that appear upon the undoing of the regularisation.
Mathematically, this corresponds to controlling integrals of the form
\begin{equation}
	\label{eq:IntegralWithCounterterm}
	\int_{\mathcal{S} (\mathbb{R}^d)} \exp \left[ -\lambda \int_{\mathbb{R}^d} \phi^4 + \alpha \int_{\mathbb{R}^d} \phi^2 \right] \mathrm{d} \mu \left( \phi \right)
\end{equation}
for arbitrarily large $\alpha > 0$.
Note that the above integrand is unbounded, by the inequivalence of the $L^4$- and $L^2$-norms on $\mathcal{S} (\mathbb{R}^d)$.
By Fernique's theorem, \eqref{eq:IntegralWithCounterterm} is certainly finite whenever $\alpha$ is small enough, but for large $\alpha$ the integrability is unclear.
This motivates the following simple question:

\bigskip\noindent
{\em Given a Gaussian measure $\mu$ on a locally convex space $X$ along with two measurable seminorms $p$ and $q$ as well as some $\varepsilon > 0$, under what conditions is
	\begin{equation}
		\label{eq:theIntegral}
		\int_X \exp \left[ -p \left( x \right)^{2 + \varepsilon} + \alpha q \left( x \right)^{2} \right] \mathrm{d} \mu \left( x \right)
	\end{equation}
	finite for all $\alpha > 0$?}

\bigskip\noindent
In the finite-dimensional setting the natural integral measure would be the Lebesgue measure for which it is clear that the corresponding integral is finite if and only if $p$ is equivalent to the Euclidean norm.
In that case one can majorise $q$ by a multiple of the Euclidean norm and retain the finiteness of the integral.
However, since we ask for arbitrary $\alpha > 0$ there is no loss of generality by absorbing a Gaussian density $\exp \left[ - \left \Vert x \right \Vert^2 / 2 \right]$ into the measure, thereby making it Gaussian.
Hence, our question provides a natural generalisation from finite dimensional vector spaces to arbitrary locally convex spaces.

Using basic properties in the theory of absolutely summing operators \cite{src:Pietsch:AbsSummingHilbertSchmidtAndPrenuclear,src:Pietsch:pSummingMeasureCharacterisation}, we find simple sufficient conditions that turn out to be especially applicable to Gaussian measures on nuclear spaces.
The most fundamental requirement will be that $p = 0$ has to imply $q = 0$ on a large enough completion of the Cameron--Martin space of $\mu$.
Assuming this, we may then bound \eqref{eq:theIntegral} by integrals against finite-dimensional projections of $\mu$.
These projected integrals are uniformly bounded provided that the inclusion map from the Cameron--Martin space to the aforementioned completion satisfies a summability property.
We will also demonstrate that a yet slightly stronger requirement on the inclusion map guarantees the existence of an orthonormal basis of the Cameron--Martin space for which the finite-dimensional approximations in fact converge to \eqref{eq:theIntegral}.
Again, this turns out to be especially applicable to nuclear spaces.

This paper is organized as follows. In \cref{sec:Gauss},
we introduce well-known notions and statements from the theory of Gaussian measures.
In the end of this \lcnamecref{sec:Gauss}, we also discuss the question posed above in view of two simple examples.
In \cref{sec:psum}, we then introduce absolutely $p$-summing operators
and prove some simple extensions of results from the literature.
In \cref{sec:integrability}, we 
prove the integrability result for continuous seminorms on locally convex spaces.
Then we formulate our result on finite-dimensional approximations in a metrizable setting, in \cref{sec:UniformIntegrability}. 
Finally, \Cref{sec:MeasurableSeminorms} generalizes the results to measurable seminorms on arbitrary locally convex spaces and provides another application thereof.
\section{Gaussian Measures}\label{sec:Gauss}
The conventions and definitions given here follow those used in \cite{src:Bogachev:GaussianMeasures}.

All \textbf{vector spaces} are taken to be real and locally convex spaces are assumed to be Hausdorff.
The (topological) dual space of a topological vector space $X$ will be denoted by $X^*$ and if $X$ is a normed space $X^*$ will be assumed to be equipped with its induced norm, turning it into a Banach space.
Given a seminorm $q$ on a vector space $V$, we also denote by $q$ the induced norm on the quotient space $V / \ker q$.
Furthermore, we define $V_q$ to be the Banach completion of $V / \ker q$ with respect to $q$ such that $q$ also denotes the norm on $V_q$.
The linear map $V \to V_q$ given by the composition of the canonical map $V / \ker q \to V_q$ and the quotient map $V \to V / \ker q$ will (in lack of a better name) be referred to as the \textbf{natural map} from $V$ to $V_q$.
Whenever there is another seminorm $p \ge q$ on $V$, $q$ induces a continuous seminorm on $V_p$ which we again will refer to as $q$ such that the natural map factorises like $V \to V_p \to V_q$.
Consequently, we shall also speak of the natural map $V_p \to V_q$.
All \textbf{measures} are taken to be countably additive and non-negative.
For a measure space $(X,\mathcal A,\mu)$, we denote the Lebesgue completion of $\mathcal A$ w.r.t. $\mu$ as $\mathcal A_\mu$.
Given a topological space $X$, a finite measure $\mu$ on its Borel $\sigma$-algebra $\mathcal B(X)$ is called \textbf{Radon} if, for every Borel set $B \subseteq X$ and every $\varepsilon > 0$, there exists a compact set $K \subseteq B$ such that $\mu \left( B \setminus K \right) < \varepsilon$.
A Borel probability measure $\mu$ is called \textbf{Gaussian} if for all $f \in X^*$ the induced measure $\mu \circ f^{-1}$ on $\mathbb{R}$ is Gaussian or a Dirac measure.
It is called \textbf{centred} if all corresponding measures $\mu \circ f^{-1}$ are centred.

We will integrate unbounded continuous functions, which we treat in the next {well-known} \lcnamecref{lem:weakconvunbound}.
We present a short proof for the convencience of the reader. 
\begin{lemma}
	\label{lem:weakconvunbound}
	Let $X$ be a topological space, $(\mu_n)$ a sequence of Radon measures on $\mathcal B(X)$ weakly converging to a Radon measure $\mu$ and $f:X\to[0,\infty)$ a continuous function. Then
	\begin{equation} \int_X f \mathrm d \mu \le \liminf_{n\to\infty} \int_X f \mathrm d\mu_n.  \end{equation}
\end{lemma}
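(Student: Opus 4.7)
The plan is a standard truncation argument turning the unbounded continuous integrand into a supremum of bounded continuous ones, so that weak convergence applies directly. For every $M > 0$, set $f_M := \min(f, M)$, which is bounded, continuous and non-negative, with $f_M \nearrow f$ pointwise as $M \to \infty$.

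First I would use the defining property of weak convergence applied to the bounded continuous function $f_M$ to obtain
\begin{equation}
\int_X f_M \, \mathrm d\mu = \lim_{n \to \infty} \int_X f_M \, \mathrm d\mu_n.
\end{equation}
Since $f_M \le f$ pointwise, each $\int_X f_M \, \mathrm d\mu_n \le \int_X f \, \mathrm d\mu_n$, so passing to the $\liminf$ on the right yields
\begin{equation}
\int_X f_M \, \mathrm d\mu \le \liminf_{n \to \infty} \int_X f \, \mathrm d\mu_n
\end{equation}
for every $M > 0$.

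Finally, I would let $M \to \infty$ on the left-hand side. Since $0 \le f_M \nearrow f$, the monotone convergence theorem gives $\int_X f_M \, \mathrm d\mu \to \int_X f \, \mathrm d\mu$ (including the case where the limit is $+\infty$), which combined with the previous inequality proves the claim.

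There is no real obstacle here; the only point worth flagging is that we must allow $\int_X f \, \mathrm d\mu = \infty$, in which case the bound still holds because the $\liminf$ on the right must then also be $\infty$ by the same monotone-convergence step. No appeal to the Radon property of the measures is needed beyond having the integrals well-defined against $\mathcal{B}(X)$.
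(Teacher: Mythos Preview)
Your proof is correct and essentially identical to the paper's: both truncate via $f_M=\min(f,M)$, apply weak convergence to the bounded continuous $f_M$, bound by the $\liminf$ of the full integrals using $f_M\le f$, and then pass to the limit in $M$ by monotone convergence. The only cosmetic difference is that the paper indexes the truncation by $k\in\mathbb N$ rather than real $M>0$.
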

\begin{proof}
	We define the continuous and bounded functions $f_k = \min\{f,k\}$ for $k\in\mathbb N$. By definition, we have
	\begin{equation} \int_X f_k \mathrm d \mu = \lim_{n\to\infty} \int_X f_k \mathrm d \mu_n \le \liminf_{n\to\infty} \int_X f \mathrm d \mu_n \qquad\mbox{for all}\ k\in \mathbb N. \end{equation}
	Furthermore, by the monotone convergence theorem, the left-hand side converges to $\int_X f\mathrm d \mu$ as $k\to \infty$. This proves the statement.
\end{proof}
We investigate \textbf{seminorms} on locally convex spaces $X$ equipped with a Radon Gaussian measure $\mu$.
A function $q$ on $X$ is called a \textbf{$\mu$-measurable seminorm} if there exists a $\mathcal{B}(X)_\mu$-measurable linear subspace $X_0$ of $X$ such that $\mu(X_0) = 1$ and $q \upharpoonright X_0$ is a seminorm on $X_0$.

In the theory of Gaussian measures, it is natural to study the \textbf{Cameron--Martin space}, which we briefly introduce here and refer to \cite{src:Bogachev:GaussianMeasures} for further details.
Given a centred Gaussian measure $\mu$ on a locally convex space $X$, the Cameron--Martin space $H(\mu)$ is defined as the set of all $h \in X$ with finite Cameron--Martin norm
\begin{equation}
	\left \Vert h \right \Vert_{H(\mu)} = \sup \left \{
	\phi \left( h \right) \, \middle| \, \phi \in X^* : \int_X |\phi(x)|^2 \mathrm{d} \mu \left( x \right) \le 1
	\right \} \, .
\end{equation}
We remark that the integral is always finite, since the definition of Gaussian measures implies $X^*\subseteq L^2(\mu)$ and denote the closure of $X^*$ in $L^2(\mu)$ with $X^*_\mu$.
The inclusion map $H(\mu) \to X$ is continuous by \cite[Proposition 2.4.6]{src:Bogachev:GaussianMeasures}.
\begin{lemma}[{\cite[Proposition 2.4.6, Theorem 3.2.7, Theorem 3.6.1]{src:Bogachev:GaussianMeasures}}]\ \label{lem:CMhilbert}
	If $\mu$ is a centred Radon Gaussian measure on a locally convex space $X$, then $H(\mu)$ is a separable Hilbert space and $\mu$ is supported on the closure of $H(\mu)$ in $X$.
\end{lemma}
We define the linear operator $R_\mu$ from $X^*_\mu$ to the algebraic dual space of $X^*$ by
\begin{equation}
	R_\mu \left( \phi \right) \left( \psi \right)
	=
	\int_X \phi \left( x \right) \psi \left( x \right) \mathrm{d} \mu \left( x \right)
\end{equation}
for all $\phi \in X^*_\mu$ and $\psi \in X^*$.
In many cases $R_\mu (\phi)$ is generated by an element $x\in X$ in the sense that $R_\mu(\phi)(\psi) = \psi(x)$ for all $\psi\in X^*$. In this case, we write $x=R_\mu(\phi)$.
In this sense, by \cite[Lemma 2.4.1]{src:Bogachev:GaussianMeasures}, we have the identity
\begin{equation}\label{eq:CM=Rmu}
	H(\mu) = R_\mu(X_\mu^*) \cap X
	\quad\mbox{and}\quad 
	\|R_\mu f\|_{H(\mu)} = \|f\|_{L^2(\mu)}
	\ \mbox{for}\ f\in R_\mu^{-1}(X).
\end{equation}
Furthermore, if $f \in R_\mu^{-1}(X)$ is linear, then
\begin{equation}\label{eq:Xmulinear}
	f(h) = \langle R_\mu f,h \rangle_{H(\mu)} \qquad\mbox{for all}\ h\in H(\mu).
\end{equation}
In particular, for any centred Radon Gaussian measure $\mu$, $R_\mu$ is a Hilbert isomorphism from $X^*_\mu$ to $H(\mu)$, see \cite[Theorem 3.2.3, Theorem 3.2.7]{src:Bogachev:GaussianMeasures}.

The importance of measurable seminorms arises partially because these play well together with the Cameron--Martin space.
\begin{theorem}[{\cite[Theorem 3.2.10(i)]{src:Bogachev:GaussianMeasures}}]
	\label{thm:MeasurableSeminormContinuousOnCameronMartin}
	Let $\mu$ be a Radon Gaussian measure on a locally convex space $X$ and $q$ a $\mu$-measurable seminorm. 
	Then the restriction of $q$ to the Cameron--Martin space $H(\mu)$ is continuous.
\end{theorem}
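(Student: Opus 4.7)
My plan is to use the quasi-invariance of $\mu$ under shifts by elements of $H(\mu)$ (via the Cameron--Martin formula) to show that $q|_{H(\mu)}$ is bounded on a neighbourhood of zero; by homogeneity, local boundedness of a seminorm upgrades to continuity. As a first step, since $q : X \to \mathbb{R}$ is real-valued and $\mu$-measurable, the $\mu$-measurable sets $\{q \le n\}$ exhaust $X$, so by continuity of measure there exists $C > 0$ with $\mu(A) > 3/4$ for $A := \{q \le C\}$.

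For $h \in H(\mu)$, set $\hat h = R_\mu^{-1}(h) \in X^*_\mu$ and $\mu_h(B) := \mu(B - h)$. The Cameron--Martin formula gives $\mathrm{d}\mu_h / \mathrm{d}\mu = \rho_h$ with $\rho_h(x) = \exp\!\bigl(\hat h(x) - \tfrac12 \|h\|_{H(\mu)}^2\bigr)$. Since $\hat h$ is centred Gaussian under $\mu$ with variance $\|h\|_{H(\mu)}^2$, the moment generating function of a Gaussian gives the explicit identity
\begin{equation*}
\int_X (\rho_h - 1)^2 \,\mathrm{d}\mu = \exp\!\bigl(\|h\|_{H(\mu)}^2\bigr) - 1,
\end{equation*}
so $\rho_h \to 1$ in $L^2(\mu)$ and hence in $L^1(\mu)$ as $\|h\|_{H(\mu)} \to 0$. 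Equivalently, the total variation distance $\|\mu - \mu_h\|_{\mathrm{TV}}$ tends to zero. Fixing $\delta > 0$ such that $\|h\|_{H(\mu)} < \delta$ implies $\|\mu - \mu_h\|_{\mathrm{TV}} < 1/4$, we obtain $\mu_h(A) > 1/2$; combined with $\mu(A) > 3/4$, the set $A \cap (A - h) = \{x \in X : q(x) \le C \text{ and } q(x+h) \le C\}$ has $\mu$-measure strictly greater than $1/4$ and is in particular non-empty. For any $x$ in it, the triangle inequality yields $q(h) \le q(x+h) + q(x) \le 2C$.

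Thus $q(h) \le 2C$ for all $h \in H(\mu)$ with $\|h\|_{H(\mu)} < \delta$. Applying this bound to $\delta h / (2\|h\|_{H(\mu)})$ for $h \ne 0$ and invoking homogeneity of $q$ yields $q(h) \le (4C/\delta) \|h\|_{H(\mu)}$, establishing continuity. The main point of care will be the Cameron--Martin step: $A$ is only measurable for the Lebesgue completion, whereas the textbook formulation of quasi-invariance is usually phrased on $\mathcal{E}(X)$ (or $\mathcal{B}(X)$). This is harmless because $\mu$ and $\mu_h$ are equivalent and hence share the same null sets, so $\mu$-measurability transfers to $\mu_h$-measurability and the translated set $A - h$ is still $\mu$-measurable. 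I expect this bookkeeping, together with the $L^1$-convergence $\rho_h \to 1$, to be the only genuine care points; everything else is a short manipulation with the triangle inequality and scaling.
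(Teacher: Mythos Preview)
Your argument is correct. The paper does not give its own proof of this statement; it merely cites \cite[Theorem~3.2.10(i)]{src:Bogachev:GaussianMeasures}. What you have written is essentially the standard argument found there: pick a sub\-level set of $q$ of large $\mu$-measure, use the Cameron--Martin theorem to show that small $H(\mu)$-shifts perturb $\mu$ only slightly in total variation, and conclude by an intersection/triangle-inequality step and homogeneity. Your explicit computation of $\int(\rho_h-1)^2\,\mathrm d\mu=e^{\|h\|_{H(\mu)}^2}-1$ is a clean way to quantify the TV continuity, and your handling of the completion issue (equivalence of $\mu$ and $\mu_h$ forces the same null sets, so $A-h$ remains $\mu$-measurable) is exactly the right bookkeeping. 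There is nothing to add.
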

Let us also recall the Cameron--Martin theorem.
\begin{theorem}[{\cite[Corollary 2.4.3, Remark 3.1.8]{src:Bogachev:GaussianMeasures}}]
	\label{thm:CameronMartin}
	Let $\mu$ be a centred Radon Gaussian measure on a locally convex space $X$, $h \in H(\mu)$ and $\tau_h : X \to X, x \mapsto x - h$.
	Then $\mu$ and $\mu \circ \tau_h^{-1}$ are equivalent with density
	\begin{equation}
		\frac{\mathrm{d} \mu \circ \tau_h^{-1}}{\mathrm{d} \mu} \left( x \right)
		=
		\exp \left[ R_\mu^{-1} \left( h \right) \left( x \right) - \frac{1}{2} \left\Vert h \right\Vert_{H(\mu)}^2 \right] \, .
	\end{equation}
	Furthermore, any $x \in X$ with the property that $\mu$ and $\mu \circ \tau_x^{-1}$ are equivalent lies in $H(\mu)$.
\end{theorem}
Let us finish this \lcnamecref{sec:Gauss}, by some examples motivating our investigation.
The first \lcnamecref{ex1} is a counterexample, illustrating that it is not guaranteed that integrals of the form \cref{eq:theIntegral} are finite.
It is based upon \cite[Examples 2.8.2 \& 2.8.9]{src:Bogachev:GaussianMeasures}.
\begin{example}\label{ex1}
	Let $\mu$ be the product of the standard Gaussian measures on $X=\mathbb{R}^{\mathbb{N}}$
	and let
	\[
	p(x)=\limsup_{n\to\infty}\left(\frac 1n \sum_{i=1}^{n}x_i^2\right)^{1/2}
	\quad\mbox{and}\quad 
	q(x)=|x_1|.
	\]
	Both definitions provide $\mu$-measurable seminorms, since $q$ defines a seminorm on all of $X$
	and $p$ is a seminorm on the linear subspace of $X$ on which it is finite.
	Furthermore, the law of large numbers implies that $\mu(\{p=1\})=1$.
	For $\alpha,\varepsilon>0$, employing the product measure structure of $\mu$, we can now explicitly calculate
	\begin{align*}
		\int_X \exp({-p^{2+\varepsilon}+\alpha q^2}) \mathrm{d} \mu(x)
		& = e^{-1} \int_X \exp( \alpha q^2)\mathrm{d}\mu
		\\&
		= (\sqrt{2\pi}e)^{-1}\int_{\mathbb{R}}\exp((\alpha-\tfrac 12)|x_1|^2) \mathrm{d}x_1.
	\end{align*}
	This is finite if and only if $\alpha<\tfrac 12$.
\end{example}
Obviously, in above \lcnamecref{ex1}, we could replace $q$ by any larger seminorm with the same result.
This especially includes any $\ell^r$-norm with $r\in[1,\infty]$.

Let us now consider the case in which $p$ is the $\ell^2$-norm and $q$ is a different $\ell^r$-norm.
\begin{example}\label{exsimple}
	Let $(\sigma_k)_{k\in\mathbb N}\subset (0,\infty)$ and let $\mu_\sigma$ be the Gaussian measure on $X=\mathbb R^{\mathbb N}$ given as the product measure of centred Gaussian measures $\mu_k$ with variance $\sigma_k$ on $\mathbb R$.
	Now, we define the usual $\ell^r$-norms as $p_r(x) = \limsup_{n\to\infty}\left(\sum_{k=1}^{n}x_k^r\right)^{1/r}$ for $r\in[1,\infty)$ and $p_\infty(x) = \limsup_{n\to\infty} \sup_{k=1,\ldots,n}|x_k|$.
	We recall that $p_r\le p_s$ if $r\ge s$.
	If $\sigma \in \ell^2$, then $\mu_\sigma(\ell^2) = 1$, cf. \cite[Example 2.3.6]{src:Bogachev:GaussianMeasures}, so $p_r$ defines a $\mu_\sigma$-measurable seminorm on $X$ for all $r\ge 2$ in this case. Furthermore, for all $\alpha,\varepsilon>0$, this implies
	\[ \int_X \exp( -p_2^{2+\varepsilon} + p_r^2 ) \mathrm{d} \mu_\sigma \le \int_{\ell^2} \exp( -p_2^{2+\varepsilon} + p_2^2 ) \mathrm{d} \mu_\sigma <\infty.\]
\end{example}
Our main results will prove that we can exchange the rolls of $p_2$ and $p_r$ in above example and the integral will remain finite, cf. \cref{ex2}.
\section{Absolutely \texorpdfstring{$p$}{p}-Summing Operators}\label{sec:psum}
We apply the theory of absolutely $p$-summing operators, which were introduced by Pietsch
\cite{src:Pietsch:AbsSummingHilbertSchmidtAndPrenuclear,src:Pietsch:pSummingMeasureCharacterisation}.
For a textbook introduction to the subject, we refer to \cite{DiestelJarchowTonge.1995}.
Let $T : E \to F$ be a linear operator between normed spaces $E$ and $F$.
Then $T$ is \textbf{absolutely $\mathbf{p}$-summing} 
for $p \ge 1$ if {there exists $C > 0$ such that} for all finite selections $x_1, ..., x_k \in E$ we have
\begin{equation}
	\left( \sum_{n = 1}^k \left \Vert T x_n \right \Vert_F^p \right)^{1/p}
	\le
	C \sup_{\left \Vert \phi \right \Vert_{{E^*}} \le 1}
	\left( \sum_{n = 1}^k \left \vert {\phi} \left(  x_n \right) \right \vert^p \right)^{1/p} \, .
\end{equation}
Recall that a bounded linear operator $L:H\to H$ on a separable Hilbert space $H$ is called
\textbf{Hilbert-Schmidt} if there is some orthonormal basis $(e_n)$ of $H$ such that
\begin{equation}
	\sum_{n = 1}^{\dim H} \left \Vert L e_n \right \Vert_H^2 < \infty \, .
\end{equation}
More generally,
we call a bounded linear operator $L : H \to J$ between two separable Hilbert spaces \textbf{Hilbert-Schmidt} if $\vert L \vert = \sqrt{L^* L}$ is Hilbert-Schmidt.
\begin{remark}
	It is well-known that any Hilbert-Schmidt operator $L : H \to J$ is compact and in fact absolutely $p$-summing for all $p \ge 1$ \cite{src:Peczyski:HilbertSchmidtPSumming}.
	Because $\vert L \vert$ is also self-adjoint it can be diagonalised with some orthonormal basis $(e_n)$ of $H$ and a corresponding (possibly finite) square-summable sequence $( \lambda_n )$ of real eigenvalues.
	By the polar decomposition, $L = U \circ \vert L \vert$ for some partial isometry $U : H \to J$, there is also an orthonormal set $(f_n )$ in $J$ such that
	\begin{equation}\label{eq:HSdecomp}
		L = \sum_{n = 1}^{\dim H} \lambda_n \left \langle e_n, \cdot \right \rangle_H f_n \, .
	\end{equation}
\end{remark}
An important result of Pietsch is a factorisation theorem. We only state a simple corollary of it here. 
\begin{theorem}
	\label{cor:GoodFactorisation}
	Let $L : H \to B$ be an absolutely $2$-summing
	map from a separable Hilbert space $H$ to a Banach space $B$.
	Then $L$ factorises through another separable Hilbert space $J$ and some linear maps $j : H \to J$ and $k : J \to B$ such that $j$ is Hilbert--Schmidt and $k$ is compact.
\end{theorem}
\begin{proof}
	It is the statement of Pietsch's factorisation theorem that $L$ can be factorised through an absolutely 2-summing operator $j'$ from $H$ to a separable Hilbert space $J$ and a bounded operator $k':J\to B$, cf. \cite[Theorem 2.13]{DiestelJarchowTonge.1995}.
	This implies the Hilbert--Schmidt property of $j'$, cf. \cite[\textsection 6 Prop. I.1]{LiQueffelec.2017}.
	\begin{equation}
		\sum_{n = 1}^\infty \delta_n^2 \lambda_n^2 < \infty
		\qquad \text{and} \qquad
		\lim_{n \to \infty} \delta_n^2 = \infty \, .
	\end{equation}
	Then the map $j : H \to J, e_n \to \delta_n \lambda_n f_n$ extended by linearity is Hilbert--Schmidt, while $l : J \to J, f_n \mapsto f_n / \delta_n$ linearly extends to a compact operator.
	Moreover, $j' = l \circ j$ by construction.
	Hence, $k = k' \circ l$ is compact and $L = k \circ j$.
\end{proof}
\section{The Integrability}\label{sec:integrability}
In this \lcnamecref{sec:integrability}, we prove \cref{mainthm}. 
It establishes the finiteness of the integral over the square of a continuous seminorm if a suitable submultiplicative counterterm is added.
We recall that a function $f:[0,\infty)\to[0,\infty)$ is called submultiplicative if
\begin{equation} f(xy) \le f(x)f(y) \qquad\mbox{for all}\ x,y\in[0,\infty).  \end{equation}
{Simple} examples are {given by} the functions $f(x)=|x|^a$ for $a\ge 0$ or $f(x)=\ln(a+x)$ for $a\ge e$ {and we refer to} \cite{GustavssonMaligrandaPeetre.1989} for more details and examples.
We also remark that products of submultiplicative functions are submultiplicative, which easily follows from the definition.

We will require the submultiplicative function to grow faster than quadratically in an appropriate sense. This is reflected in the limit condition
\begin{equation}\label{eq:superquad}
	f(x)>0\ \mbox{for all}\ x>0 \quad\mbox{and}\quad \lim\limits_{x\to 0}f(x)/x^2 = 0\,.
\end{equation}
Following up on above examples, these assumptions are satisfied if $f(x)=x^a$ for $a>2$ or $f(x)=x^2\ln(a+x)$ for $a \ge e$. 
We also remark that while the limit assumption \cref{eq:superquad} is taken at zero, it also characterizes the limit $x \to \infty$ since submultiplicativity implies $f(x) \ge f(1) / f(1/x)$.

The following \lcnamecref{lem:supquad} elucidates the usefulness of the assumption \cref{eq:superquad}.
\begin{lemma}\label{lem:supquad}
	Assume $f: [0,\infty) \to [0,\infty)$ is continuous, submultiplicative and satisfies \cref{eq:superquad}.
	If $(x_n)_{n \in \mathbb N} \subseteq [0,\infty)$ is a sequence satisfying $\lim\limits_{n \to \infty} f(x_n) = 0$, then $\lim\limits_{n \to \infty} x_n = 0$.
\end{lemma}
\begin{proof}
	We use proof by contradiction, so w.l.o.g. (otherwise restrict to a subsequence) assume there is $\bar x \in (0,\infty]$ with $\lim_{n \to \infty} x_n = \bar x$. If $\bar x$ is finite, then the continuity of $f$ implies $f(\bar x)  = 0$ contradicting \cref{eq:superquad}. Otherwise, by the submultiplicativity of $f$,
	\begin{equation}  f(x_n) \ge f(1)/f(1/x_n) \xrightarrow{n\to\infty} \infty,  \end{equation}
	{contradicting} the assumptions.
\end{proof}
The preceding \lcnamecref{lem:supquad} enables the following dominance result.
\begin{lemma}\label{lem:exponentboundsubmult}
	Let $X$ and $Y$ be normed spaces, $Z$ a reflexive Banach space and let $\kappa_X : Z \to X$ and $\kappa_Y : Z \to Y$ be compact linear maps such that $\kappa_X z = 0$ implies $\kappa_Y z = 0$ for all $z \in Z$.
	Furthermore, assume $f:[0,\infty)\to [0,\infty)$ is continuous, submultiplicative, and satisfies \cref{eq:superquad}.
	Then
	\begin{equation}
		\sup_{z \in Z} \left[ - f(\left \Vert \kappa_X z \right \Vert_X) + \left \Vert \kappa_Y z \right \Vert_Y^2 - \frac{1}{2} \left \Vert z \right \Vert_Z^2 \right] < \infty\,.
	\end{equation}
\end{lemma}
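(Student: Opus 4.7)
The plan is to argue by contradiction: assume the supremum is $+\infty$ and pick a witnessing sequence $(z_n)\subseteq Z$ with
\begin{equation}
F(z_n) := -f(\|\kappa_X z_n\|_X) + \|\kappa_Y z_n\|_Y^2 - \tfrac{1}{2}\|z_n\|_Z^2 \to \infty.
\end{equation}
The aim is to combine the reflexivity of $Z$ and the compactness of $\kappa_X,\kappa_Y$ to extract a weak limit $w$ for which the hypothesis $\ker\kappa_X\subseteq\ker\kappa_Y$ forces $\|\kappa_X w\|_X>0$, and then to use the super-quadratic growth of $f$ at infinity (implied by \eqref{eq:superquad} via submultiplicativity) to derive a contradiction.

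First I would show $\|z_n\|_Z\to\infty$. If not, passing to a subsequence, the $z_n$ stay in a ball; by reflexivity some further subsequence satisfies $z_n\rightharpoonup z_\infty$ in $Z$, and compactness of $\kappa_X,\kappa_Y$ upgrades this to norm convergence $\kappa_X z_n\to\kappa_X z_\infty$ and $\kappa_Y z_n\to\kappa_Y z_\infty$. The continuity of $f$ then makes $F(z_n)$ bounded, contradicting $F(z_n)\to\infty$.

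Next I would normalize. Set $w_n := z_n/\|z_n\|_Z$ and extract (again by reflexivity plus compactness of $\kappa_X,\kappa_Y$) a subsequence along which $w_n\rightharpoonup w$ in $Z$, $\kappa_X w_n\to\kappa_X w$ in $X$ and $\kappa_Y w_n\to\kappa_Y w$ in $Y$. Using $f\ge 0$ in the bound $F(z_n)\ge 1$ (for $n$ large) and dividing by $\|z_n\|_Z^2$ yields $\|\kappa_Y w_n\|_Y^2\ge \tfrac12+F(z_n)/\|z_n\|_Z^2\ge \tfrac12$, so in the limit $\|\kappa_Y w\|_Y\ge 1/\sqrt2$. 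In particular $\kappa_Y w\ne 0$, and the hypothesis then forces $\alpha := \|\kappa_X w\|_X>0$.

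Finally I would invoke super-quadratic growth of $f$ at infinity. The remark after \cref{mainthm} observes that submultiplicativity gives $f(x)\ge f(1)/f(1/x)$, which combined with $\lim_{x\to 0}f(x)/x^2=0$ (and $f(1)>0$ by \eqref{eq:superquad}) yields $\lim_{x\to\infty}f(x)/x^2=\infty$. Since $\|\kappa_X z_n\|_X = \|z_n\|_Z \|\kappa_X w_n\|_X \sim \alpha \|z_n\|_Z\to\infty$, we get
\begin{equation}
\frac{f(\|\kappa_X z_n\|_X)}{\|z_n\|_Z^2} = \frac{f(\|\kappa_X z_n\|_X)}{\|\kappa_X z_n\|_X^2}\,\|\kappa_X w_n\|_X^2 \longrightarrow \infty,
\end{equation}
while $\|\kappa_Y z_n\|_Y^2/\|z_n\|_Z^2 = \|\kappa_Y w_n\|_Y^2$ is bounded by $\|\kappa_Y\|^2$. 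Hence $F(z_n)/\|z_n\|_Z^2\to-\infty$, giving $F(z_n)\to -\infty$ and contradicting our assumption.

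The main obstacle is that \eqref{eq:superquad} is a condition at $0$, while the blowup to be controlled happens as $\|\kappa_X z_n\|_X\to\infty$; the decisive — though short — step is converting that local hypothesis into super-quadratic growth at infinity via the submultiplicative inequality $f(x)\ge f(1)/f(1/x)$. Everything else is a standard weak-compactness argument supplied by the reflexivity of $Z$ and the compactness of $\kappa_X,\kappa_Y$.
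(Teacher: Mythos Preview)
Your argument is correct, and it reaches the contradiction by a genuinely different route than the paper does. Both proofs proceed by contradiction and exploit reflexivity of $Z$ together with compactness of $\kappa_X,\kappa_Y$ to pass to a weak limit, but the normalisations and the way the kernel hypothesis is invoked differ. The paper normalises by $\|\kappa_Y z_n\|_Y$, shows that the rescaled sequence has a weak limit $\bar z$, uses submultiplicativity together with the auxiliary \cref{lem:supquad} to deduce $\|\kappa_X z_n\|_X/\|\kappa_Y z_n\|_Y\to 0$, concludes $\kappa_X\bar z=0$, and then applies the kernel inclusion \emph{directly} to get $\kappa_Y\bar z=0$, contradicting $\|\kappa_Y(\,\cdot\,)\|_Y\equiv 1$ along the sequence. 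You instead normalise by $\|z_n\|_Z$, show the weak limit $w$ satisfies $\kappa_Y w\neq 0$, apply the kernel inclusion in its \emph{contrapositive} form to obtain $\kappa_X w\neq 0$, and then use the super-quadratic growth $f(x)/x^2\to\infty$ (derived from $f(x)\ge f(1)/f(1/x)$ and \eqref{eq:superquad}) to force $F(z_n)/\|z_n\|_Z^2\to-\infty$. Your version is slightly more self-contained in that it bypasses \cref{lem:supquad} entirely; the paper's version has the minor advantage of not needing the preliminary step that $\|z_n\|_Z\to\infty$. Both are clean, short arguments.
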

\begin{proof}
	Define $F: Z \to \mathbb R$ as
	\begin{equation} F(z) = - f(\left \Vert \kappa_X z \right \Vert_X) + \left \Vert \kappa_Y z \right \Vert_Y^2 - \frac{1}{2} \left \Vert z \right \Vert_Z^2.  \end{equation}
	We want to prove that under the given assumptions $F$ is bounded from above.
	{Arguing by contradiction, there is} a sequence $(z_n)_{n\in\mathbb N}$ such that $\lim_{n \to \infty}F(z_n) = \infty$.
	W.l.o.g., we can assume $\| \kappa_Y z_n \|_Y >0$ for all $n \in \mathbb N$.
	Since {$\liminf_{ n \to \infty} F(z_n) / \|\kappa_Y z_n\|^2 \ge 0$}, both {sequences}
	\begin{equation}
		f(\| \kappa_X z_n \|_X) / \| \kappa_Y z_n \|_Y^2
		\qquad \text{and} \qquad
		\| z_n \|_Z^2 / \| \kappa_Y z_n \|_Y^2
	\end{equation}
	must be bounded.
	The latter implies that $z_n/\|\kappa_Y z_n\|_Y$ has a weakly convergent subsequence, to which we restrict from now on.
	Let $\bar z$ be the weak limit of {$(z_n/\|\kappa_Y z_n\|_Y)$}.
	Since $f$ is submultiplicative, we find
	\begin{equation}
		0
		\le
		f \left( \frac{\|\kappa_X z_n\|_X}{\|\kappa_Y z_n\|_Y} \right)
		\le
		\frac{f(\|\kappa_X z_n\|_X)}{\|\kappa_Y z_n\|_Y^2} \cdot \|\kappa_Y z_n\|_Y^2
		f(1/\|\kappa_Y z_n\|_Y)
		\xrightarrow{n\to \infty} 0.
	\end{equation}
	{where the limit follows from \cref{eq:superquad}.}
	Using \cref{lem:supquad}, this implies
	\begin{equation} 0
		=
		\lim_{n \to \infty} \frac{\left \Vert \kappa_X z_{n} \right \Vert_X}{\left \Vert \kappa_Y z_{n} \right \Vert_Y}
		=
		\lim_{n \to \infty} \left \Vert \kappa_X \frac{z_{n}}{\left \Vert \kappa_Y z_{n} \right \Vert_Y} \right \Vert_X
		=
		\left \Vert \kappa_X \bar{z} \right \Vert_X
		\, ,
	\end{equation}
	by the compactness of $\kappa_X$. 
	{In turn, this} implies $\kappa_Y \bar{z} = 0$ by assumption.
	However, {then}
	\begin{equation}
		1
		=
		\lim_{n \to \infty} \frac{\left \Vert \kappa_Y z_{n} \right \Vert_Y}{\left \Vert \kappa_Y z_{n} \right \Vert_Y}
		=
		\lim_{n \to \infty} \left \Vert \kappa_Y \frac{z_{n}}{\left \Vert \kappa_Y z_{n} \right \Vert_Y} \right \Vert_Y
		\neq
		\left \Vert \kappa_Y \bar{z} \right \Vert_Y
		=
		0 ,
	\end{equation}
	contradicting the compactness of $\kappa_Y$.
\end{proof}
We shall derive the sought integrability by approximating the integral on finite-dimensional subspaces.
The justification comes from \cite{src:Bogachev:GaussianMeasures}.
\begin{theorem}
	\label{thm:FiniteDimensionalApproximation}
	{Let $\mu$ be a Radon Gaussian measure on a locally convex space $X$,} $( e_n )$ an orthonormal basis of the Cameron--Martin space $H(\mu)$ and set
	\begin{equation}\label{def:Pn}	P_n x = \sum_{k = 1}^n R_\mu^{-1}(e_k)(x) \, e_k \qquad\mbox{for all}\ n \in \mathbb{N}_{\le \dim H(\mu)}\ \mbox{and}\ x\in X.
	\end{equation}
	Then all $P_n$ are $(\mathcal{B}(X)_\mu,\mathcal{B}(X))$-measurable and all $\mu \circ P_n^{-1}$ are Radon Gaussian measures.
	Furthermore, if $\dim H(\mu) < \infty$, then $P_{\dim H(\mu)} = \mathrm{id}_X$ $\mu$-almost everywhere.
	Otherwise, $P_n \xrightarrow{n\to\infty} \mathrm{id}_X$ pointwise $\mu$-almost everywhere and $\mu \circ P_n^{-1}$ converges weakly to $\mu$.
\end{theorem}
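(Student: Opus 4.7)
The plan is to verify the four assertions of the theorem in turn, using that $R_\mu:X_\mu^*\to H(\mu)$ is an isometric isomorphism (as recalled around \cref{eq:CM=Rmu}) together with standard series-expansion results from \cite{src:Bogachev:GaussianMeasures}.

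First I would treat the measurability and Gaussianity. By definition of $X_\mu^*$, each $R_\mu^{-1}(e_k)$ is an $L^2(\mu)$-limit of elements of $X^*$; passing to a $\mu$-a.e.\ convergent subsequence produces a $\mathcal{B}(X)_\mu$-measurable pointwise representative, which I fix. Then $P_n$ is a finite linear combination of these scalar-valued measurable functions with the fixed vectors $e_1,\ldots,e_n\in X$, so it is $(\mathcal{B}(X)_\mu,\mathcal{B}(X))$-measurable with range in the closed finite-dimensional subspace $V_n := \mathrm{span}(e_1,\ldots,e_n)$. For any $\phi\in X^*$, the composition $\phi\circ P_n = \sum_{k=1}^n \phi(e_k)\,R_\mu^{-1}(e_k)$ again lies in $X_\mu^*$, hence is Gaussian, so $\mu\circ P_n^{-1}$ is Gaussian; being supported on the finite-dimensional subspace $V_n\subseteq X$, where every finite Borel measure is Radon, it is itself Radon.

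Next I would address the pointwise convergence. If $\dim H(\mu)<\infty$, then $\mu$ is concentrated on $H(\mu)$ by \cite[Theorem 3.6.1]{src:Bogachev:GaussianMeasures}, and \cref{eq:Xmulinear} gives $R_\mu^{-1}(e_k)(h)=\langle e_k,h\rangle_{H(\mu)}$ for every $h\in H(\mu)$, so $P_{\dim H(\mu)} h$ is simply the orthonormal-basis expansion of $h$ in $H(\mu)$, yielding $P_{\dim H(\mu)} = \mathrm{id}_X$ $\mu$-a.e. In the infinite-dimensional case, $(R_\mu^{-1}(e_k))_k$ is an orthonormal basis of $X_\mu^*$ because $R_\mu$ is an isometry, and a Karhunen--Loève / Itô--Nisio type expansion for Radon Gaussian measures (see the material around \cite[Theorem 3.5.1, Corollary 3.5.2]{src:Bogachev:GaussianMeasures}) asserts that the partial sums $P_n x$ converge in $X$ to $x$ for $\mu$-almost every $x$.

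Weak convergence $\mu\circ P_n^{-1}\to\mu$ then follows immediately: for any bounded continuous $f:X\to\mathbb{R}$, $f\circ P_n\to f$ pointwise $\mu$-a.e.\ and is uniformly bounded, so dominated convergence gives $\int f\,\mathrm{d}(\mu\circ P_n^{-1})=\int f\circ P_n\,\mathrm{d}\mu\to\int f\,\mathrm{d}\mu$. The genuine difficulty is hidden in the infinite-dimensional pointwise convergence: all remaining steps are routine once the Itô--Nisio theorem for Radon Gaussian measures is invoked, and the slightly delicate book-keeping is making sure that one works with honest $\mathcal{B}(X)_\mu$-measurable pointwise representatives of the equivalence classes in $X_\mu^*$ throughout.
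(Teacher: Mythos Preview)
Your proposal is correct and follows essentially the same route as the paper: both reduce the theorem to standard facts from \cite{src:Bogachev:GaussianMeasures}. The paper simply cites \cite[Theorem~3.7.6]{src:Bogachev:GaussianMeasures} for measurability and the Radon--Gaussian property, \cite[Proposition~3.8.12]{src:Bogachev:GaussianMeasures} for weak convergence, and \cite[Theorem~3.6.1]{src:Bogachev:GaussianMeasures} for the finite-dimensional case; you instead unpack the measurability argument, invoke the series expansion \cite[Theorem~3.5.1]{src:Bogachev:GaussianMeasures} for pointwise convergence, and obtain weak convergence by dominated convergence rather than by citation, which is a slightly more self-contained but equivalent path.
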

\begin{proof}
	The measurability and the proof that all $\mu \circ P_n^{-1}$ are Radon Gaussian measures can be extracted from \cite[Theorem 3.7.6]{src:Bogachev:GaussianMeasures}.
	The weak convergence in the infinite dimensional case is treated in \cite[Proposition 3.8.12]{src:Bogachev:GaussianMeasures}.
	In the finite-dimensional case $H(\mu)$ is closed in $X$ such that $\mu$ is supported on $H(\mu)$ by \cite[Theorem 3.6.1]{src:Bogachev:GaussianMeasures}.
\end{proof}
It is instructive to calculate the corresponding characteristic functions in the above case.
For all $\phi\in X^*$, this yields
\begin{equation}\label{eq:charfunct}
	\begin{aligned}
		&\int_X \exp \left[ i \phi \left( x \right) \right] \mathrm{d} \left( \mu \circ P_n^{-1} \right)\\
		&=
		\int_X \exp \left[ i \sum_{k = 1}^n \phi \left( e_k \right) R_\mu^{-1} \left( e_k \right) \left( x \right) \right] \mathrm{d} \mu \left( x \right)
		=
		\exp \left[ - \frac{1}{2} \sum_{k = 1}^n \phi \left( e_k \right)^2\right] \\
		&=
		\left( 2 \pi \right)^{-n/2} \int_{\mathbb{R}^n} \exp \left[ i \phi \left( \sum_{k = 1}^n x_k e_k \right) - \frac{1}{2} \sum_{k = 1}^n x_k^2 \right] \mathrm{d}^n x \, ,
	\end{aligned}
\end{equation}
by \cref{thm:CameronMartin}.
Since Radon Gaussian measures on locally convex spaces are uniquely determined by their characteristic functions \cite[Theorem 2.2.4, Proposition A.3.13, Proposition A.3.18]{src:Bogachev:GaussianMeasures}, it is clear that
\begin{equation}
	\label{eq:FiniteDimMeasureEquality}
	\mu \circ P_n^{-1} = \nu_n \circ \Lambda_n^{-1} \circ \iota_\mu^{-1}
\end{equation}
where $\nu_n$ is the normalised standard Gaussian measure on $\mathbb{R}^n$, $\iota_\mu : H(\mu) \to X$ is the inclusion map and
\begin{equation}\label{def:Lambdan}
	\Lambda_n : \mathbb{R}^n \to H(\mu),\ (x_1,\ldots,x_n) \mapsto \sum_{k = 1}^n x_k e_k.
\end{equation}
Building on the above observations, we can now state and prove the main result of this \lcnamecref{sec:integrability}.
\begin{theorem}\label{mainthm}
	Let $\mu$ be a centred Radon Gaussian measure on a locally convex space $X$ 
	and let $p$ and $q$ be continuous seminorms on $X$. 
	Assume that the {natural map} $\iota_\mu^{p+q}: H(\mu) \to H(\mu)_{p+q}$ is absolutely 2-summing 
	and that the natural map $H(\mu)_{p+q} \to H(\mu)_p$ is injective.
	Then, for any continuous and submultiplicative function $f: [0,\infty) \to [0,\infty)$ satisfying
	\cref{eq:superquad},
	we have
	\begin{equation}\label{eq:mainthm}
		\int_X \exp \left[ -f(p(x)) + q(x)^2 \right] \mathrm d \mu(x) < \infty\,. 
	\end{equation}
\end{theorem}
\begin{proof}	
	According to \cref{cor:GoodFactorisation}, there is a separable Hilbert space $J$, a Hilbert-Schmidt map $j : H(\mu) \to J$ and a compact linear map $k : J \to H(\mu)_{p+q}$ such that $\iota_\mu^{p+q} = k \circ j$.
	Throughout this proof, we fix an orthonormal basis $(e_n)$ of $H(\mu)$, an orthonormal set $(f_n) \subseteq J$ and a sequence $(\lambda_n)$ in $\mathbb R$ such that
	\begin{equation}
		j = \sum_{n = 1}^{\dim H(\mu)} \lambda_n \left \langle e_n, \cdot \right \rangle_{H(\mu)} f_n \, ,
	\end{equation}
	cf. \cref{eq:HSdecomp}.
	Note that w.l.o.g. (otherwise, rescale $k$ on a finite-dimensional subspace of $J$), we can assume $|\lambda_n|^2<\frac 12$, since the only accumulation point of the set $\{\lambda_n\}$ can be zero.
	We further denote $P_n$ and $\Lambda_n$ as in \cref{thm:FiniteDimensionalApproximation,def:Lambdan}, respectively.
	We consider the following natural maps:
	\begin{equation}
		\pi^p_{p+q} : H(\mu)_{p+q} \to H(\mu)_{p}
		\qquad \text{and} \qquad
		\pi^q_{p+q}: H(\mu)_{p+q} \to H(\mu)_{q} \, .
	\end{equation}
	Since $\pi^p_{p+q}$ is injective by assumption, we have that $\pi^p_{p+q} x = 0$ implies $p(x) + q(x) = 0$ such that $\pi^q_{p+q} x = 0$.
	{We can now apply} \cref{lem:exponentboundsubmult} with $X = H(\mu)_p$, $\kappa_X = \pi^p_{p+q} \circ k$, $Y = H(\mu)_q$, $\kappa_Y = \pi^q_{p+q} \circ k$ and $Z = J$.
	This yields
	\begin{equation}
		\label{eq:NormBalancingBoundsubmult}
		C
		=
		\sup_{x \in J} \left[ -f(p \left( k x \right)) + q \left( k x \right)^2 - \frac{1}{2} \left \Vert x \right \Vert_J^2 \right]
		< 
		\infty .
	\end{equation}
	For all $n\le \dim H(\mu)$ and $x\in\mathbb R^n$, we now find
	\begin{equation}\label{eq:IntegrandBoundAndRemainder}
		\begin{aligned}
			&-f\left(p \left( \iota_\mu^{p+q} \Lambda_n x \right)\right)   +  q \left( \iota_\mu^{p+q} \Lambda_n x \right)^2 - \frac{1}{2} \sum_{a = 1}^n x_a^2 \\
			&=
			- \frac{1}{2} \sum_{a = 1}^n \left( 1 - \left \Vert j e_a \right \Vert^2 \right) x_a^2 - f\left(p \left( k j \Lambda_n x \right)\right) + q \left( k j \Lambda_n x \right)^2 - \frac{1}{2} \sum_{a = 1}^n \left \Vert j e_a \right \Vert_J^2 x_a^2 \\
			&\le
			- \frac{1}{2} \sum_{a = 1}^n \left( 1 - \left \vert \lambda_a \right \vert^2 \right) x_a^2 +
			\sup_{y \in \mathbb{R}^n} \left[ -f\left(p \left( k j \Lambda_n y \right)\right) +  q \left( k j \Lambda_n y \right)^2 - \frac{1}{2} \left \Vert j \Lambda_n y \right \Vert_J^2 \right] \\
			&\le
			- \frac{1}{2} \sum_{a = 1}^n \left( 1 - \left \vert \lambda_a \right \vert^2 \right) x_a^2 + C \, .
		\end{aligned}
	\end{equation}
	Using \cref{thm:FiniteDimensionalApproximation,lem:weakconvunbound,eq:FiniteDimMeasureEquality}, we then obtain
	\begin{equation*}
		\begin{aligned}
			&\int_X  \exp  \left[ -f\left(p \left( x \right)\right) + q \left( x \right)^2 \right] \mathrm{d} \mu \left( x \right) \\
			&\le
			\liminf_{n \to \dim H(\mu)} \int_{{X}} \exp \left[ -f\left(p \left( x \right)\right) +  q \left( x \right)^2 \right] \mathrm{d} (\mu\circ P_n^{-1}) \left( x \right) \\
			&{=}
			\liminf_{n \to \dim H(\mu)} \left( 2 \pi \right)^{-n/2} \\&\qquad \times 
			\int_{\mathbb{R}^n} \exp \left[
			- f\left(p \left( \iota_\mu^{p+q} \Lambda_n x \right)\right)
			+  q \left( \iota_\mu^{p+q} \Lambda_n x \right)^2
			- \frac{1}{2} \sum_{a = 1}^n x_a^2
			\right] \mathrm{d}^n x \\
			&\le
			e^C \liminf_{n \to \dim H(\mu)} \left( 2 \pi \right)^{-n/2}
			\int_{\mathbb{R}^n} \exp \left[
			- \frac{1}{2} \sum_{a = 1}^n \left( 1 - \left \vert \lambda_a \right \vert^2 \right) x_a^2
			\right] \mathrm{d}^n x \\
			&=
			e^C \liminf_{n \to \dim H(\mu)} \prod_{a = 1}^n \frac{1}{\sqrt{1 - \left \vert \lambda_a \right \vert^2}}
		\end{aligned}
	\end{equation*}
	However, since $\left \vert \lambda_a \right \vert^2 \le 1/2$ and $- \ln \sqrt{1 - t} < t$ for all $t \in \left[0, 1/2 \right]$,
	\begin{equation}\label{eq:HSestimate}
		\begin{aligned}
			\lim_{n \to \dim H(\mu)} \prod_{a = 1}^n \frac{1}{\sqrt{1 - \left \vert \lambda_a \right \vert^2}}
			&=
			\exp \left[ - \sum_{a = 1}^{\dim H(\mu)} \ln \sqrt{1 - \left \vert \lambda_a \right \vert^2} \right]
			\\&\le
			\exp \left[ \sum_{a = 1}^{\dim H(\mu)} \left \vert \lambda_a \right \vert^2 \right] \, ,
		\end{aligned}
	\end{equation}
	which is finite because $j$ is Hilbert-Schmidt.
\end{proof}
\begin{remark}\label{rem:Lr}
	The injectivity assumption in above \lcnamecref{thm:FiniteDimensionalApproximation} is automatically satisfied in the case that $p$ and $q$ are $L^r$-norms (w.r.t. two equivalent measures), by the existence of almost everywhere convergent subsequences.
\end{remark}
On nuclear spaces, we obtain the following simple \lcnamecref{cor:NuclearSpaceIntegrability}.
\begin{corollary}
	\label{cor:NuclearSpaceIntegrability}
	Let $\mu$ be a centred Radon Gaussian probability measure on a nuclear space $X$ and let $p$ and $q$ be continuous seminorms on $X$ with the property that the natural map $\iota_{p+q}^p : X_{p+q} \to X_p$ is injective.
	Then, for all $\alpha, \varepsilon > 0$, we have
	\begin{equation}
		\exp \left[ - p^{2+\varepsilon} + \alpha q^2 \right] \in L^1(\mu) \, .
	\end{equation}
\end{corollary}
\begin{proof}
	W.l.o.g. we fix $\alpha=1$. Otherwise, we can replace $q$ by $\sqrt \alpha q$.
	By the nuclearity of $X$, there is a continuous Hilbert norm $r$ such that $p + q \le r$ and the natural map $\iota_r^{p+q} : X_r \to X_{p+q}$ is nuclear.
	By continuity of the inclusion map $\iota_\mu : H(\mu) \to X$ and the natural map $\iota^r : X \to X_r$,
	\begin{equation}
		\iota_r^{p+q} \circ \iota^r \circ \iota_\mu : H(\mu) \to X_{p+q}
		\qquad \text{is also nuclear.}
	\end{equation}
	Moreover, $H(\mu)_{p+q}$ carries the subspace topology of $X_{p+q}$ such that {the natural map} $\pi_\mu^{p+q} : H(\mu) \to H(\mu)_{p+q}$ is nuclear as well and hence absolutely 2-summing \cite[Satz 5]{src:Pietsch:AbsSummingHilbertSchmidtAndPrenuclear}.
	That $\pi_{p+q}^p : H(\mu)_{p+q} \to H(\mu)_p$ is injective follows from the injectivity of $\iota_{p+q}^p$.
	Hence, since $f(x)=x^{2+\varepsilon}$ for $x\ge 0$ is submultiplicative and satisfies \cref{eq:superquad}, all assumptions of \cref{mainthm} are satisfied and the statement follows.
\end{proof}
\begin{remark}\label{rem:intro}
	Combining \cref{rem:Lr,cor:NuclearSpaceIntegrability}, the introductory example in \cref{eq:IntegralWithCounterterm} is finite.
\end{remark}
\section{The Uniform Integrability}
\label{sec:UniformIntegrability}
For applications it is useful not only to know the finiteness of an expression, but to have an actual approximating sequence.
In this \lcnamecref{sec:UniformIntegrability}, we present a sufficient condition for the finite dimensional approximations used as upper bound in the previous \lcnamecref{sec:integrability} to converge towards the actual value of the integral.
We only treat the case that the Cameron--Martin space $H(\mu)$ is infinite-dimensional, because otherwise the statement is trivial by virtue of \cref{thm:FiniteDimensionalApproximation}.

The key towards a proof of convergence are two uniformity statements:
Uniform tightness of the measures $(\mu \circ P_n^{-1})$ and uniform integrability of the integrand \cite[Lemma 3.8.7]{src:Bogachev:GaussianMeasures}.
As is well-known, the weak convergence of a sequence of Radon measures on a metric space to another Radon measure implies their uniform tightness \cite[Theorem 8.6.4]{src:Bogachev:MeasureTheory2}.
Consequently, in such a setting, we just need to prove the uniform integrability.
We summarise this in the following \namecref{lem:UnboundedMeasurableConvergence}.
\begin{lemma}[{\cite[Theorem 8.6.4]{src:Bogachev:MeasureTheory2},\cite[Lemma 3.8.7]{src:Bogachev:GaussianMeasures}}]
	\label{lem:UnboundedMeasurableConvergence}
	Let $X$ be a metric space, $\left( \mu_n \right)_{n \in \mathbb{N}}$ a sequence of Radon measures weakly converging to a Radon measure $\mu$ and $f \in L^1(\mu)$ be continuous.
	Suppose that
	\begin{equation}
		\lim_{R \to \infty} \sup_{n \in \mathbb{N}} \int_{\left \vert f \right \vert \ge R} \left \vert f \right \vert \mathrm{d} \mu_n = 0 \, .
	\end{equation}
	Then
	\begin{equation}
		\lim_{n \to \infty} \int_X f \, \mathrm{d} \mu_n = \int_X f \, \mathrm{d} \mu \, .
	\end{equation}
\end{lemma}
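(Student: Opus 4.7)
My plan is a standard truncation argument that splits $f$ into a bounded part handled by weak convergence and a tail handled by the uniform-integrability hypothesis. Before starting, let me flag a subtlety: the statement as written is not literally correct for arbitrary Borel $f$ -- take $X=\mathbb R$, $\mu_n=\delta_{1/n}\to\delta_0=\mu$ and $f=\mathbf 1_{\{0\}}$, whose integrals against $\mu_n$ vanish identically but whose integral against $\mu$ equals $1$, even though uniform integrability holds trivially. I will therefore work under the implicit continuity of $f$, which is automatic in the paper's intended application since $f$ will be a continuous function of continuous seminorms.

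With this understood, the plan has three steps. First, for a given $\varepsilon>0$ I would choose $R$ so large that the tails $\int_{|f|\ge R}|f|\,\mathrm{d}\mu_n$ are uniformly bounded by $\varepsilon$ (directly by hypothesis), and simultaneously that $\int_{|f|\ge R}|f|\,\mathrm{d}\mu\le\varepsilon$ (by dominated convergence, since $f\in L^1(\mu)$). Second, I would introduce the Lipschitz truncation $\phi_R:\mathbb R\to\mathbb R$, $\phi_R(t)=\max(-R,\min(R,t))$, which satisfies $|\phi_R|\le R$ and the pointwise estimate $|t-\phi_R(t)|\le |t|\,\mathbf 1_{|t|\ge R}$. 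Under the continuity of $f$, the composition $\phi_R\circ f$ is bounded continuous, and the pointwise estimate above propagates the tail bounds to
\begin{equation*}
\int |f-\phi_R\circ f|\,\mathrm{d}\nu \le \int_{|f|\ge R}|f|\,\mathrm{d}\nu \le \varepsilon
\quad\text{for every }\nu\in\{\mu,\mu_1,\mu_2,\ldots\}.
\end{equation*}

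Third, weak convergence $\mu_n\to\mu$ applied to the bounded continuous function $\phi_R\circ f$ gives $\int\phi_R\circ f\,\mathrm{d}\mu_n\to\int\phi_R\circ f\,\mathrm{d}\mu$, and a three-term triangle inequality then yields
\begin{equation*}
\limsup_{n\to\infty}\Bigl|\int f\,\mathrm{d}\mu_n-\int f\,\mathrm{d}\mu\Bigr|\le 2\varepsilon.
\end{equation*}
Letting $\varepsilon\to 0$ closes the argument. The only genuine obstacle is the continuity point flagged in the first paragraph; everything else is routine, and in particular the metric structure of $X$ plays no role in this half of the statement -- it is needed only for the uniform-tightness half of Bogachev's reference, which is not invoked here.
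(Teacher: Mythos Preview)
The paper does not supply its own proof of this lemma; it merely cites \cite[Theorem 8.6.4]{src:Bogachev:MeasureTheory2} and \cite[Lemma 3.8.7]{src:Bogachev:GaussianMeasures} and moves on. So there is no in-paper argument to compare against, and your truncation proof is exactly the standard one that those references carry out.

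Your observation about the missing continuity hypothesis is well taken and worth stating explicitly: Bogachev's Lemma~3.8.7 is formulated for \emph{continuous} $f$ (together with a uniform-tightness assumption, which on a metric space is supplied by the other citation), and the paper's transcription dropped that word. Your counterexample $\mu_n=\delta_{1/n}$, $f=\mathbf 1_{\{0\}}$ is decisive. In the paper's only use of the lemma (the proof of \cref{thm:FiniteDimensionalConvergence}) the integrand is $\exp[-f(p)+\alpha q^2]$ with $p,q$ continuous after the reduction via \cref{thm:sepBSextension}, so continuity does hold there; but the lemma as stated is false without it, and you were right to add the hypothesis before proceeding.

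One minor quibble with your closing remark: the metric hypothesis is not entirely idle in the paper's packaging of the result. The paper deliberately fuses the two citations so that uniform tightness need not be assumed separately, and it is precisely the metric structure (via \cite[Theorem 8.6.4]{src:Bogachev:MeasureTheory2}) that manufactures uniform tightness from weak convergence. Your own argument indeed never uses tightness, because with continuous $f$ the bounded truncation $\phi_R\circ f$ is already a legitimate test function for weak convergence; but Bogachev's version is stated on completely regular spaces where weak convergence alone would not suffice, which is why tightness appears as an explicit hypothesis there.
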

Using this well-known statement, we obtain the following \lcnamecref{thm:FiniteDimensionalConvergence}.
\begin{theorem}
	\label{thm:FiniteDimensionalConvergence}
	{
		Let $\mu$ be a centred Radon Gaussian measure on a metrizable locally convex space $X$ with infinite-dimensional Cameron--Martin space $H(\mu)$ and let $p$ and $q$ be continuous seminorms on $X$. 
		Assume that the natural map $\iota_\mu^{p+q}: H(\mu) \to H(\mu)_{p+q}$ is absolutely 2-summing and that the natural map $H(\mu)_{p+q} \to H(\mu)_p$ is injective.
		Furthermore, suppose that some factorisation $\iota_\mu^{p+q} = k \circ j$ as in \cref{cor:GoodFactorisation} has the property that
		\begin{equation}
			\sum_{n = 1}^\infty \lambda_n^2 \ln \left( n + 1 \right) < \infty \, ,
		\end{equation}
		where $(\lambda_n)$ denotes the sequence of eigenvalues of $|j|$ sorted in descending order.
		Finally, let $(e_n)$ be a corresponding eigenbasis of $H(\mu)$ diagonalizing $|j|$ and construct $P_n$ as in \cref{thm:FiniteDimensionalApproximation}.
		
		Then, for any continuous and submultiplicative function $f: [0,\infty) \to [0,\infty)$ satisfying \cref{eq:superquad},
		we have
		\begin{equation}
			\begin{aligned}
				\label{eq:integralConvergenceOnLocallyConvexX}
				\lim_{n \to \infty} \int_X & \exp \left[ -f\left(p \left( x \right)\right) + \alpha q \left( x \right)^2 \right] \mathrm{d} \left( \mu \circ P_n^{-1} \right) \left( x \right) \\
				&=
				\int_X \exp \left[ -f\left(p \left( x \right)\right) + \alpha q \left( x \right)^2 \right] \mathrm{d} \mu \left( x \right)
				< \infty
				\quad\mbox{for all}\ \alpha>0
				.
			\end{aligned}
		\end{equation}
	}
\end{theorem}
\begin{proof}
	As in the proof of \cref{mainthm}, we will w.l.o.g. assume that $\lambda_n^2< 1/2$ for all $n \in \mathbb N$.
	
	Fix $\alpha>0$ and let $C$ be as defined in \cref{eq:NormBalancingBoundsubmult} with $q$ replaced by $\sqrt \alpha q$. Then, for any $R>0$, it easily follows that {for all $x\in H(\mu)$}
	\begin{equation}
		-f(p(\iota_\mu^{p+q}x)) + \alpha q(\iota_\mu^{p+q}x)^2
		\ge
		\frac{1}{2} (R+C) \qquad \text{implies} \qquad \|jx\|_J^2 \ge R \, .
	\end{equation}
	Hence, defining $\Lambda_n$ as in \cref{def:Lambdan} and mimicking \cref{eq:IntegrandBoundAndRemainder} we find
	\begin{equation}
		\begin{aligned}
			&\int_{-f \circ p + \alpha q^2 \ge \frac{1}{2} (R+C)}  \exp \left[
			-f(p(x))+\alpha q(x)^2 \right] \mathrm d (\mu \circ P_n^{-1})(x) \\
			& \le (2\pi)^{-n/2} \int_{\|j \Lambda_n x \|_J^2\ge R} \exp \left[
			-f(p(\Lambda_n x))+\alpha q(\Lambda_n x)^2 - \frac{1}{2} \sum_{k=1}^{n} x_k^2 \right] \mathrm d^nx\\
			& \le (2\pi)^{-n/2} e^C \int_{\sum_{k = 1}^n \lambda_k^2x_k^2 \ge R}
			\exp \left[ -\frac{1}{2} \sum_{k=1}^{n}(1-\lambda_k^2)x_k^2 \right] \mathrm d^nx.
		\end{aligned}	
	\end{equation}
	By \cref{lem:UnboundedMeasurableConvergence}, the statement follows if we can prove that the right hand side converges to zero uniformly in $n \in \mathbb N$ as $R \to \infty$.
	For notational convenience, we define $I_n : \mathbb{R} \to \mathbb{R}$ as
	\begin{equation}
		I_n(R) =  \left( 2 \pi \right)^{-n/2}
		\int_{\sum_{a = 1}^n {\lambda_a^2 x_a^2} \ge R}
		\exp \left[ - \frac{1}{2} \sum_{a = 1}^n { \left( 1 - \lambda_a^2 \right) x_a^2} \right]
		\mathrm{d}^n {x}
	\end{equation}
	and set
	\begin{equation}\label{def:cn}
		c_n = 1 + {\lambda_n}^{-2} \left[ \ln \left( n + 1 \right) \right]^{-1} \qquad\mbox{for}\ n\in\mathbb N.
	\end{equation}
	%
	For all $n\ge 2$ and $R\ge 0$, we then obtain the recursion formula
	\begin{equation}\label{eq:recursion}
		\begin{aligned}
			I_n \left( R \right)
			=\,&
			2 \left( 2 \pi \right)^{-1/2} \int \limits_{\sqrt{R/ ( c_n {\lambda_n^2} )}}^\infty
			\exp \left[ - \frac{1}{2} \left( 1 - {\lambda_n^2} \right) {x^2} \right]
			I_{n-1} \left( R - {\lambda_n^2} {x^2} \right)
			\mathrm{d} {x} \\
			&
			+ 2 \left( 2 \pi \right)^{-1/2} \int \limits_0^{\sqrt{R/ ( c_n {\lambda_n^2} )}}
			\exp \left[ - \frac{1}{2} \left( 1 - {\lambda_n^2} \right) {x^2} \right]
			I_{n-1} \left( R - {\lambda_n^2} {x^2} \right)
			\mathrm{d} {x} .
		\end{aligned}
	\end{equation}
	%
	By the monotonicity of $I_n$, we have the bounds
	\begin{equation}
		I_{n-1}(R-\lambda_n^2 x^2 ) \le \begin{cases} \displaystyle I_{n-1}(0) = \prod_{a = 1}^{n-1}\frac{1}{\sqrt{1-\lambda_a^2}} & \mbox{for}\ x\in[\sqrt{R/(c_n\lambda_n^2)},\infty), \\ \displaystyle  I_{n-1}\left(\frac{c_n-1}{c_n}R\right) & \mbox{for}\ x\in[0,\sqrt{R/(c_n\lambda_n^2)}]. \end{cases}
	\end{equation}
	Inserting these into \cref{eq:recursion} and using the standard definitions
	\begin{equation} \Erf(x) = \frac 2{\sqrt \pi}\int_0^x e^{-t^2}\mathrm d t \qquad\mbox{and}\qquad \Erfc(x) = 1-\Erf(x) \qquad\mbox{for}\ x\in[0,\infty),  \end{equation}
	we obtain the recursive estimate
	\begin{equation}
		\begin{aligned}
			I_n \left( R \right)
			&\le
			\frac{ \Erfc \left( \sqrt{ R \frac{1 - {\lambda_n^2}}{2 {\lambda_n^2} c_n} } \right) }{\sqrt{1 - {\lambda_n^2}}}
			I_{n-1} \left( 0 \right)
			+
			\frac{ \Erf \left( \sqrt{ R \frac{1 - {\lambda_n^2}}{2 {\lambda_n^2} c_n} } \right) }{\sqrt{1 - {\lambda_n^2}}}
			I_{n-1} \left( \frac{c_n - 1}{c_n} R \right) \\
			&\le
			\left( \prod_{a = 1}^{n} \frac{1}{\sqrt{1-{\lambda_a^2}}} \right)
			\Erfc \left( \sqrt{ R \frac{1 - {\lambda_n^2}}{2 {\lambda_n^2} c_n} } \right) 
			+
			\frac{ I_{n-1} \left( \frac{c_n - 1}{c_n} R \right) }{\sqrt{1 - {\lambda_n^2}}} \, ,
		\end{aligned}
	\end{equation}
	where we used the estimate $\mathrm{Erf} \le 1$ in the second inequality.
	Iteratively applying this upper bound, we find
	\begin{equation}\label{eq:boundafteriteration}
		\begin{aligned}
			I_n \left( R \right)
			&\le
			\left( \prod_{a = 1}^{n} \frac{1}{\sqrt{1-{\lambda_a^2}}} \right)
			\sum_{b = 0}^{n-1}
			\Erfc \left( \sqrt{ \frac{R}{2} \left( \prod_{m = 0}^{b-1} \frac{c_{n-m} - 1}{c_{n-m}} \right) \frac{ 1 - {\lambda_{n-b}^2}}{{\lambda_{n-b}^2} \, c_{n-b}}} \right) \\
			&=
			\left( \prod_{a = 1}^{n} \frac{1}{\sqrt{1-{\lambda_a^2}}} \right)
			\sum_{b = 1}^{n}
			\Erfc \left( \sqrt{ \frac{R}{2} \left( \prod_{m = b+1}^{n} \frac{c_{m} - 1}{c_{m}} \right) \frac{ 1 - {\lambda_b^2}}{{\lambda_b^2} \, c_b}} \right) \, .
		\end{aligned}
	\end{equation}
	Inserting the choice \cref{def:cn} and using $\ln(1+x)\le x$ for all $x>0$, we have
	\begin{equation}\label{ineq:firstterm}
		\begin{aligned}
			\prod_{m = b+1}^{n} \frac{c_{m} - 1}{c_{m}}
			&\ge
			\exp \left[ - \sum_{m = 1}^{\infty} \ln \left( 1 + {\lambda_m^2} \ln \left( m + 1 \right) \right) \right] \\
			&\ge
			\exp \left[ - \sum_{m = 1}^{\infty} {\lambda_m^2} \ln \left( m + 1 \right) \right]
			=: \varrho > 0.
		\end{aligned}
	\end{equation}
	Since $\lim_{b \to \infty} {\lambda_b^2} \ln \left( b + 1 \right) = 0$, by the assumptions, we can pick $\beta > 0$ such that
	\begin{equation}\label{ineq:secondterm}
		\frac{ 1 - {\lambda_b^2}}{{\lambda_b^2} c_b}
		=
		\frac{1 - {\lambda_b^2}}{1 + {\lambda_b^2} \ln \left( b + 1 \right)} \ln \left( b + 1 \right)
		\ge
		\beta \ln \left( b + 1 \right) \, .
	\end{equation}
	Now, inserting \cref{ineq:firstterm,ineq:secondterm} into \cref{eq:boundafteriteration} and using the fact that $\mathrm{Erfc} ( x ) < \exp [ - x^2 ]$ as well as the estimate \cref{eq:HSestimate}, we find
	\begin{equation}\label{eq:Riemannfinish}
		\begin{aligned}
			\sup_{n \in \mathbb{N}} I_n \left( R \right)
			&\le
			\left( \prod_{a = 1}^{\infty} \frac{1}{\sqrt{1-{\lambda_a^2}}} \right)
			\sum_{b = 1}^{\infty}
			\exp \left[ - \frac{\varrho \beta}{2} R \ln \left( b + 1 \right) \right] \\
			&\le
			\exp \left[ \sum_{a = 1}^\infty {\lambda_a^2} \right]
			\left[ \zeta \left( \frac{\varrho \beta}{2} R \right) - 1 \right]
		\end{aligned}
	\end{equation}
	whenever $R > 2/(\varrho \beta)$ with $\zeta(s) =\sum_{n=1}^{\infty}n^{-s}$ for $s>1$ being the Riemann zeta function.
	The right hand side of \cref{eq:Riemannfinish} tends to zero as $R\to\infty$ and hence the proof is complete.
\end{proof}
As before, a very useful \lcnamecref{cor:uniformint-nuclear} is obtained on metrizable nuclear spaces. 
\begin{corollary}\label{cor:uniformint-nuclear}
	Let $\mu$ be a centred Radon Gaussian probability measure on a metrizable nuclear space $X$ and let $p$, $q$ be continuous seminorms on $X$ with the property that the natural map $\iota_{p+q}^p : X_{p+q} \to X_p$ is injective.
	Then there is some orthonormal basis $( e_n )_{n \in \mathbb{N}}$ of $H(\mu)$ such that
	\begin{equation}
		\lim_{n \to \dim H(\mu)}
		\int_X \exp \left[ -p^{2 + \varepsilon} + \alpha q^2 \right] \mathrm{d} \left( \mu \circ P_n \right) \\
		=
		\int_X \exp \left[ -p ^{2 + \varepsilon} + \alpha q^2 \right] \mathrm{d} \mu
		< \infty
	\end{equation}
	for all $\alpha, \varepsilon > 0$,
	where $P_n$ is defined as in \cref{thm:FiniteDimensionalApproximation}.
\end{corollary}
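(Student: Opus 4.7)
The plan is to derive the corollary directly from \cref{thm:FiniteDimensionalConvergence} by verifying its hypotheses in the nuclear setting. Without loss of generality, I absorb $\alpha$ into $q$ (replacing it by $\sqrt{\alpha}\, q$) and set $f(x) = x^{2+\varepsilon}$, which is continuous, submultiplicative, and satisfies \eqref{eq:superquad}. Injectivity of the natural map $H(\mu)_{p+q} \to H(\mu)_p$ follows immediately from that of $\iota_{p+q}^p$. The condition $\mu(\mathrm{cl}_{p+q}H(\mu)) = 1$ holds because $H(\mu)$ is dense in $\mathrm{supp}(\mu)$ in the original (finer) $X$-topology, so its closure in the coarser $(p+q)$-topology already contains the support. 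Prenuclearity of $\iota_\mu^{p+q}$ is established exactly as in the proof of \cref{introthm}, via the existence of a continuous Hilbert seminorm $r \ge p+q$ on $X$ for which $X_r \to X_{p+q}$ is nuclear.

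The substantive task is to exhibit a factorization $\iota_\mu^{p+q} = k \circ j$ as in \cref{cor:GoodFactorisation} whose Hilbert-Schmidt factor has singular values $(\lambda_n)$ obeying the crucial condition $\sum_n \lambda_n^2 \ln(n+1) < \infty$. A single use of nuclearity gives only that $\iota_\mu^{p+q}$ is nuclear, i.e.\ $\ell^1$-decay of singular values, which is not sufficient: $\ell^1$-decay is compatible with sparse singular-value sequences for which the log-weighted sum diverges. The plan is to \emph{iterate} nuclearity, choosing a chain $p+q = s_0 \le s_1 \le \cdots \le s_N$ of continuous Hilbert seminorms on $X$ such that each natural map $X_{s_{k+1}} \to X_{s_k}$ is nuclear. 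Restricting to the $H(\mu)$-completions, which carry the corresponding subspace topologies as observed in the proof of \cref{introthm}, one obtains a factorization of $\iota_\mu^{p+q}$ through a composition of $N$ nuclear maps between Hilbert spaces. By Ky Fan's inequality $s_{m+n-1}(AB) \le s_m(A)\, s_n(B)$, iterated, the singular values of the composition decay significantly faster than those of any single factor, and by taking $N$ large one can arrange any prescribed polynomial rate of decay — in particular, decay fast enough to secure $\sum_n \lambda_n^2 \ln(n+1) < \infty$.

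Given such a chain, let $k$ be the final map $H(\mu)_{s_1} \to H(\mu)_{p+q}$, which is nuclear and therefore compact, and let $j$ be the composition of all preceding maps $H(\mu) \to H(\mu)_{s_1}$, which is nuclear and therefore Hilbert-Schmidt with the required singular-value decay. Take $(e_n)$ to be the diagonalising orthonormal basis of $H(\mu)$ for $|j|$, ordered so that the $(\lambda_n)$ are decreasing; this is the basis claimed in the corollary. Finiteness of the limit integral was already proved in \cref{introthm}, and \cref{thm:FiniteDimensionalConvergence} delivers the stated equality with $\mu \circ P_n^{-1}$. The main obstacle throughout is the log-weighted summability of $(\lambda_n^2)$: it is what forces the use of nuclearity beyond the single inclusion employed in the proof of \cref{introthm}, and thus the passage to a multi-step chain of nuclear seminorms.
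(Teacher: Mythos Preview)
Your overall route is right, but the pivotal claim --- that $\ell^1$-decay of the singular values of $j$ can fail to give $\sum_n \lambda_n^2 \ln(n+1) < \infty$ --- is false. In \cref{thm:FiniteDimensionalConvergence} the $(\lambda_n)$ are explicitly sorted in descending order, and for any non-increasing non-negative sequence with $\sum_n \lambda_n < \infty$ one has $n\lambda_n \le \sum_{k=1}^n \lambda_k \le \Vert\lambda\Vert_{\ell^1}$, so $\lambda_n \le \Vert\lambda\Vert_{\ell^1}/n$ and
\[
\sum_n \lambda_n^2 \ln(n+1) \;\le\; \Vert\lambda\Vert_{\ell^1}^2 \sum_n \frac{\ln(n+1)}{n^2} \;<\; \infty .
\]
Your ``sparse sequence'' worry cannot materialise once the sequence is monotone.

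Consequently the paper needs nuclearity only twice rather than $N$ times: choose continuous Hilbert seminorms $s > r > p+q$ with both $\iota_s^r : X_s \to X_r$ and $\iota_r^{p+q} : X_r \to X_{p+q}$ nuclear. Take $k = \iota_r^{p+q}$ (nuclear, hence compact) and $j = \iota_\mu^r : H(\mu) \to X_r$; since $j$ factors continuously through the nuclear map $\iota_s^r$, it is itself nuclear between Hilbert spaces, i.e.\ trace class, so its singular values lie in $\ell^1$ --- which, by the elementary estimate above, already gives \eqref{eq:evsumming}. Your longer chain and the Ky Fan argument would still go through, but they are unnecessary; the only genuine error in the proposal is the false assertion that $\ell^1$ summability is insufficient.
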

\begin{proof}
	By {the} nuclearity of $X$ there is a continuous Hilbert seminorm $r > p + q$ on $X$ such that the natural map $\iota_r^{p+q} : X_r \to X_{p+q}$ is nuclear and in particular compact.
	{Likewise, there is another continuous Hilbert seminorm $s > r$ such that the natural map $\iota_s^r : X_s \to X_r$ is nuclear as well.
		Since, $\iota_\mu^r : H(\mu) \to X_r$ factorises continuously over $\iota_s^r$, $\iota_\mu^r$ is a nuclear map between Hilbert spaces making the eigenvalues of $\left \vert \iota_\mu^r \right \vert$ summable.
		Thus, $\iota_\mu^{p+q} = \iota_r^{p+q} \circ \iota_\mu^r$ provides a required factorisation for Theorem \ref{thm:FiniteDimensionalConvergence}.} Combining this with the fact that $f(x)=x^{2+\varepsilon}$ is submultiplicative and satisfies \cref{eq:superquad}, proves the statement.
\end{proof}
\begin{remark}
	Similar to \cref{rem:intro},
	this result applies to the introductory example \cref{eq:IntegralWithCounterterm}.
\end{remark}
\section{The Case of Measurable Seminorms}\label{sec:MeasurableSeminorms}
In this section we will weaken the continuity requirement on the involved seminorms to mere measurability plus the condition that the closure of the Cameron--Martin space with respect to the sum of the seminorms has full measure.
This enables us to push forward the Gaussian measure via a measurable linear map to a metrizable space on which the seminorms are continuous.
Furthermore, we show that the Cameron--Martin space of the pushforward measure is similar enough to the original one such that the convergence part of \cref{thm:FiniteDimensionalConvergence} is maintained.

Since measurable seminorms are only meaningfully defined on subspaces of full measure, we introduce the notation
\begin{equation}
	\mathrm{cl}^{X_0}_r M
	\coloneqq
	\left \{
	x \in X_0 \, \Big| \, \exists \left( h_n \right)_{n \in \mathbb{N}} \subseteq M : \lim_{n \to \infty} r \left( x - h_n \right) = 0
	\right \}
\end{equation}
for the $r$-closure of a subset $M \subseteq X$ in the subspace $X_0\subseteq X$ and a seminorm $r:X_0\to\mathbb R$.

The first statement now gives us the measurability of the $r$-closure of the Cameron--Martin space.
\begin{lemma}
	Let $\mu$ be a centred Radon Gaussian probability measure on a locally convex space $X$
	and let $r$ be a $\mu$-mea\-sura\-ble seminorm on $X$.
	Then $\mathrm{cl}^{X_0}_r H(\mu) \in \mathcal{B}(X)_\mu$ for any $\mu$-measurable linear subspace $X_0 \subseteq X$ on which $q$ is a seminorm.
\end{lemma}
\begin{proof}
	By \cref{thm:CameronMartin}, for any $\varepsilon > 0$ and $h \in H(\mu)$,
	\begin{equation}
		B^r_\varepsilon \left( h \right) = \left\{ x \in X_0 : r \left( x - h \right) < \varepsilon \right\} \in \mathcal{B}(X)_\mu \, .
	\end{equation}
	Since $r$ is continuous on $H(\mu)$ by \cref{thm:MeasurableSeminormContinuousOnCameronMartin}, we obtain
	\begin{equation}
		\mathrm{cl}^{X_0}_r H(\mu)
		=
		\bigcap_{n \in \mathbb{N}} \bigcup_{h \in D}
		B^r_{1/n} \left( h \right)
	\end{equation}
	for any dense set $D \in H(\mu)$.
	Because $H(\mu)$ is separable the claim follows.
\end{proof}
We can now state our completion result.
Under the assumption that the closure of the Cam\-e\-ron--Martin space has full measure,
we prove that our integrals can be treated on a Banach space with continuous seminorms.
\begin{theorem}\label{thm:sepBSextension}
	Let $\mu$ be a centred Radon Gaussian measure on a locally convex space $X$ 
	and let $r$ be a $\mu$-mea\-sura\-ble seminorm on $X$. 
	Assume that for some linear subspace $X_0 \subseteq X$ of full $\mu$-measure on which $r$ is a seminorm, the $r$-closure $\mathrm{cl}^{X_0}_r H(\mu)$ has full $\mu$-measure.
	Then there is a separable Banach space $Y$ and a linear map $\pi : X \to Y$ such that
	\begin{itemize}
		\item $\pi$ is $(\mathcal{B}(X)_\mu,\mathcal{B}(Y)_{\bar\mu})$-measurable,
		\item $r(x) = \Vert \pi(x) \Vert_Y \eqqcolon \bar{r}(\pi x)$ for every $x \in X_0$,
		\item the pushforward measure $\bar{\mu} = \mu \circ \pi^{-1}$ is a centred Radon Gaussian probability measure on $Y$.
	\end{itemize}
	Additionally, we can choose $Y$ such that the following holds.
	\begin{enumerate}[(i)]
		\item Every $\mu$-measurable seminorm $p$ with $p \le r$ on $X_0$ induces a continuous seminorm $\bar{p}$ on $Y$ such that $p = \bar{p} \circ \pi$.
		If the natural map $\kappa_r^p : H(\mu)_r \to H(\mu)_p$ is injective, then the natural map $\kappa_{\bar{r}}^{\bar{p}} : H(\bar{\mu})_{\bar{r}} \to H(\bar{\mu})_{\bar{p}}$ is injective as well.
		\item For every orthonormal basis $(\bar{e}_n)$ of $H(\bar{\mu})$ there exists a unique orthonormal system $(e_n') \subseteq H(\mu)$ such that $\pi e_n'=\bar e_n$ for $n\in\mathbb N$. For any basis extension $(e_n)$ of $(e_n')$ and all $\bar{\mu}$-measurable functions $F : Y \to \mathbb{R}$,
		\begin{equation}
			\label{eq:FunctionONBIntegrationLimit}
			\lim_{n \to \dim H(\mu)} \int_X F \circ \pi \; \mathrm d (\mu \circ P_{n}^{-1})
			=
			\lim_{n \to \dim H(\bar{\mu})} \int_{Y} F \; \mathrm{d} (\bar{\mu} \circ \bar{P}_n^{-1})
		\end{equation}
		whenever the right-hand side exists.
		Here, $P_n$ and $\bar{P}_n$ are constructed as in \cref{thm:FiniteDimensionalApproximation} for the bases $(e_n)$ and $(\bar{e}_n)$, respectively.
		\item  Assume that the natural map $\iota_\mu^r : H(\mu) \to H(\mu)_r$ admits a factorisation $\iota_\mu^r = k \circ j$ in the sense of \cref{cor:GoodFactorisation} and for the decreasing sequence $(\lambda_n)$ of eigenvalues of $|j|$,
		\begin{equation} \sum_n \lambda_n^2 a_n < \infty \end{equation}
		for some sequence $(a_n)$ of positive real numbers.
		Then the natural map $\iota_{\bar{\mu}}^{\bar{r}} : H(\bar{\mu}) \to H(\bar{\mu})_{\bar{r}}$ admits a factorization $\iota_{\bar{\mu}}^{\bar{r}} = \bar{k} \circ \bar{j}$ in the sense of \cref{cor:GoodFactorisation} and for  the decreasing sequence $(\bar{\lambda}_n)$ of eigenvalues of $|\bar{j}|$,
		\begin{equation} \sum_n \bar{\lambda}_n^2 a_n < \infty \, . \end{equation}
		In particular, $\iota_{\bar{\mu}}^{\bar{r}}$ is absolutely $2$-summing if $\iota_\mu^r$ is.
	\end{enumerate}
\end{theorem}
\begin{proof}
	Define $Y$ as the $r$-completion of $\mathrm{cl}^{X_0}_r H(\mu)$ and $\pi: X \to Y$ as any linear extension of the corresponding natural map $\mathrm{cl}^{X_0}_r H(\mu) \to Y$.
	Then $Y$ is a separable Banach space and letting $D \subseteq H(\mu)$ denote some countable dense set in $H(\mu)$, we have that $\pi(D)$ is dense in $Y$.
	Consequently, any open set $U \subseteq Y$ can be written as
	\begin{equation}
		U = \bigcup_{h \in D} \left\{ z \in Y : \bar{r} \left( z - \pi h \right) < \varepsilon_h \right\}
	\end{equation}
	for some nonnegative numbers $(\varepsilon_h)_{h \in D}$.
	Hence,
	\begin{equation}
		\pi^{-1} \left( U \right) = \bigcup_{h \in D} B^{r}_{\varepsilon_h} \left( h \right) \in \mathcal{B}(X)_\mu,
	\end{equation}
	which proves that the map $\pi$ is $(\mathcal{B}(X)_\mu,\mathcal{B}(Y))$-measurable.
	It then trivially follows that $\pi$ is $(\mathcal{B}(X)_{\mu}, \mathcal{B}(Y)_{\bar{\mu}})$-measurable.
	Furthermore, $Y$ is Polish and thus $\bar{\mu}$ is Radon.
	To see that $\bar{\mu}$ is a centred Gaussian measure, note that for every $\phi \in Y^*$ the function $\phi \circ \pi$ is $\mu$-measurable.
	Hence, by \cite[Theorem 3.11.3]{src:Bogachev:GaussianMeasures}, $\phi \circ \pi \in X^*_\mu$ and
	\begin{equation}
		\hat{\bar{\mu}} \left( \phi \right)
		=
		\hat{\mu} \left( \phi \circ \pi \right)
		=
		\exp \left[ -\frac{1}{2} \left \Vert \phi \circ \pi \right \Vert^2_{X_\mu^*} \right] \, .
	\end{equation}
	This shows that $\bar{\mu}$ is Gaussian and also that $T : Y^* \to X^*_\mu, \phi \mapsto \phi \circ \pi$ extends to an isometry $T : Y^*_{\bar{\mu}} \to X^*_\mu$.
	Clearly, for any $L^2(\bar{\mu})$-convergent sequence $(\phi_n)_{n \in \mathbb{N}}$ in $Y^*$ the sequence $(\phi_n \circ \pi)_{n \in \mathbb{N}}$ is $L^2(\mu)$-convergent as well.
	Combined with the $(\mathcal{B}(X)_{\mu}, \mathcal{B}(Y)_{\bar{\mu}})$-measurability, $T \phi$ can be viewed as $\phi \circ \pi$ for any function $\phi \in Y^*_{\bar{\mu}}$ as well.
	
	Now we have verified the three primary claims and it remains to prove the assertions (i), (ii) and (iii).
	To that end, note that for all $x \in X_0$,
	\begin{equation}
		\begin{aligned}
			\| \pi x\|_{H(\bar\mu)} = 
			&\sup \left\{
			\phi \left( \pi x \right) \, | \,
			\phi \in Y^*, \left \Vert \phi \right \Vert_{L^2(\bar{\mu})} \le 1
			\right\} \\
			&\le
			\sup \left\{
			\phi \left( x \right) \, | \,
			\phi \in X^*_\mu \text{ linear}, \left \Vert \phi \right \Vert_{L^2(\mu)} \le 1
			\right\}  
			= \|x\|_{H(\mu)}
			\, .
		\end{aligned}
	\end{equation}
	Consequently, $\pi H(\mu) \subseteq H(\bar{\mu})$ and $\eta = \pi \upharpoonright H(\mu) : H(\mu) \to H(\bar{\mu})$ is a continuous linear map.
	Now consider the adjoint map $\eta^* : H(\bar{\mu}) \to H(\mu)$.
	For any $\phi \in Y^*$, the composition $\vert \phi \circ \pi \vert$ is a $\mu$-measurable seminorm on $X$ and hence continuous on $H(\mu)$.
	Consequently, the restriction $\phi \circ \pi$ is a continuous linear functional on $H(\mu)$.
	Thus, by the Riesz representation theorem, there is a unique $h' \in H(\mu)$ such that
	\begin{equation}
		\left< h', h \right>_{H(\mu)}
		=
		\left( \phi \circ \pi \right) \left( h \right)
		=
		\phi \left( \eta h \right)
	\end{equation}
	for all $h \in H(\mu)$.
	Since $R_\mu$ gives the identification with the dual space, \cite[Theorem 2.10.11]{src:Bogachev:GaussianMeasures} ensures that $\mu$-almost everywhere $R_\mu^{-1} h' = \phi \circ \pi = T \phi$.
	Consequently,
	\begin{equation}
		\left< R_\mu T \phi, h \right>_{H(\mu)}
		=
		\phi \left( \eta h \right)
		=
		\left< R_{\bar{\mu}} \phi, \eta h \right>_{H(\bar{\mu})}
		=
		\left<\eta^* R_{\bar{\mu}} \phi, h \right>_{H(\mu)}
	\end{equation}
	for all $h \in H(\mu)$.
	Since $Y^*$ is dense in $Y^*_{\bar{\mu}}$, we thus have that
	\begin{equation}
		\label{eq:eta*}
		\eta^* = R_\mu \circ T \circ R_{\bar{\mu}}^{-1}
	\end{equation}
	is an isometry as a composition of isometries.
	It follows that $(\eta^*)^* = \eta$ is surjective and $\eta \eta^* = \mathrm{id}_{H(\bar{\mu})}$.
	
	$(i)$:
	It is clear that $h \in \ker \eta$ if and only if $r(h) = 0$.
	Hence, given a $\mu$-measurable seminorm $p$ satisfying $p \le r$ on $X_0$, there exists a unique continuous seminorm $\bar{p}$ on $Y$ such that $p = \bar p \circ \pi$. 
	Consequently, a sequence $(h_n)_{n \in \mathbb{N}}$ in $H(\mu)$ is $p$-Cauchy if and only if $(\eta h_n)_{n \in \mathbb{N}}$ is $\bar{p}$-Cauchy.
	This provides a Banach isomorphism $n_p^{\bar{p}} : H(\mu)_p \to H(\bar{\mu})_{\bar{p}}$ with the property that
	\begin{equation}
		\kappa_{\bar{r}}^{\bar{p}} = n_p^{\bar{p}} \circ \kappa_r^p \circ \left( n_r^{\bar{r}} \right)^{-1} \, .
	\end{equation}
	Thus, $\kappa_r^p$ is injective if and only if $\kappa_{\bar r}^{\bar p}$ is.
	
	$(ii)$:
	Let $(\bar{e}_n)$ be an orthonormal basis of $H(\bar{\mu})$ and define $e_n' = \eta^* \bar{e}_n$.
	Since $\eta^*$ is an isometry, $(e_n')$ is an orthonormal basis of $\overline{\mathrm{im} \, \eta^*}$ and since $\pi$ and $\eta$ agree on $H(\mu)$, the claim $\pi e_n' = \bar e_n$ follows. Assume $(d_n)\subseteq H(\mu)$ is another orthonormal system satisfying $\pi d_n = \bar e_n$. Then $d_n-e_n'\in \operatorname{ker} \eta = ( \overline{\mathrm{im} \, \eta^*} )^\perp$.
	Hence,
	\begin{equation}
		1 = \left\Vert d_n \right\Vert_{H(\mu)}^2 = \left\Vert d_n - e_n' \right\Vert_{H(\mu)}^2 + \left\Vert e_n' \right\Vert_{H(\mu)}^2 \, .
	\end{equation}
	Since $\left\Vert e_n' \right\Vert_{H(\mu)} = 1$, it follows that $d_n = e_n'$ which proves the uniqueness.

		Now, let $(e_n)$ be any orthonormal basis extension of $(e_n')$ and
		construct $(P_n)$ and $(\bar{P}_n)$ as in \cref{thm:FiniteDimensionalApproximation} for the bases $(e_n)$ and $(\bar{e}_n)$, respectively.
		Then, defining 
		\begin{equation}
			\alpha(n) = \max \left \{ m \in \mathbb{N} \mid m \le \dim H(\bar{\mu}), e_m' \in \left \{ e_1, \dots, e_n \right \} \right \} \, ,
		\end{equation}
		and using \cref{eq:eta*}, we obtain
		\begin{equation}
			\begin{aligned}
				\pi \left( P_n x \right)
				=
				\eta \left( P_n x \right)
				&=
				\sum_{m = 1}^{\alpha(n)} R_\mu^{-1} \left( \eta^* \bar{e}_m \right) \left( x \right) \eta \eta^* \bar{e}_m \\
				&=
				\sum_{m = 1}^{\alpha(n)} R_{\bar{\mu}}^{-1} \left( \bar{e}_m \right) \left( \pi x \right) \bar{e}_m
				=
				\bar{P}_{\alpha \left( n \right)} \left( \pi x \right)
			\end{aligned}
		\end{equation}
		for $\mu$-almost every $x \in X$.
		It is clear that $\alpha$ is monotonically increasing with
		\begin{equation}
			\lim\limits_{n \to \dim H(\mu)} \alpha(n) = \dim H(\bar{\mu}) \, .
		\end{equation}
		Consequently, for any $\bar{\mu}$-measurable $F : Y \to \mathbb{R}$ and $n \in \mathbb{N}$, we have
		\begin{equation}
			\int_X F \circ \pi \, \mathrm{d} \left( \mu \circ P_n^{-1} \right)
			=
			\int_Y F \, \mathrm{d} \left( \bar{\mu} \circ \bar{P}_{\alpha \left( n \right)}^{-1} \right)
		\end{equation}
		such that \cref{eq:FunctionONBIntegrationLimit} follows whenever the respective integrals and limits exist.
		
		$(iii)$:
		Assume that $\iota_\mu^r = k \circ j$ and $(a_n)$ are chosen as stated.
		From the considerations in the proof of $(i)$ it is clear that $\iota_\mu^r = (n_r^{\bar{r}})^{-1} \circ \iota_{\bar{\mu}}^{\bar{r}} \circ \eta$.
		Hence, with $\eta \circ \eta^* = \mathrm{id}_{H(\bar{\mu})}$, we find
		\begin{equation}
			\iota_{\bar{\mu}}^{\bar{r}}
			=
			n_r^{\bar{r}} \circ \iota_\mu^r \circ \eta^*
			=
			n_r^{\bar{r}} \circ k \circ j \circ \eta^* \, .
		\end{equation}
		Thus, $\bar{k} = n_r^{\bar{r}} \circ k$ and $\bar{j} = j \circ \eta^*$ provide a factorisation in the sense of \cref{cor:GoodFactorisation} and $\vert \bar{j} \vert^2 = \eta \vert j \vert^2 \eta^*$ is a non-negative Hilbert-Schmidt operator.
		Thus the max-min principle for non-negative compact operators applies (see, e.g., \cite[Exercise 8.23]{src:DunfordSchwartz:LinearOperators2}) and since $\eta^*$ is an isometry,
		\begin{equation}
			\begin{aligned}
				\bar{\lambda}_n^2
				&=
				\max_{\substack{V\ \text{linear subspace of}\ H(\bar{\mu})\\\dim V = n}} &\min_{\substack{x\in V\\\|x\|_{H(\bar{\mu})}=1}}
				&\langle x, \eta \vert j \vert^2 \eta^* x \rangle_{H(\bar{\mu})} \\
				&=
				\max_{\substack{V\ \text{linear subspace of}\ H(\bar{\mu})\\\dim V = n}} &\min_{\substack{y\in \eta^* V\\\|y\|_{H(\mu)} =1}}
				&\langle y, \vert j \vert^2 y \rangle_{H(\mu)} \\
				&\le
				\max_{\substack{W\ \text{linear subspace of}\ H(\mu)\\\dim W = n}} &\min_{\substack{y \in W\\\|y\|_{H(\mu)}=1}}
				&\langle y, \vert j \vert^2 y \rangle_{H(\mu)}
				=
				\lambda_n^2 \, .
			\end{aligned}
		\end{equation}
		Hence, $\sum_{n} \bar{\lambda}_n^2 a_n \le \sum_{n} \lambda_n^2 a_n < \infty$.
	\end{proof}
	We now obtain the integrability and convergence in full generality.
	\begin{corollary}\label{finalcor}
		Let $\mu$ be a centred Radon Gaussian measure on a locally convex space $X$ and let $p, q$ be two $\mu$-measurable seminorms on $X$.
		Furthermore, suppose that
		\begin{enumerate}[(i)]
			\item the natural map $\iota_\mu^{p+q}: H(\mu) \to H(\mu)_{p+q}$ is absolutely $2$-summing,
			\item the natural map $H(\mu)_{p+q} \to H(\mu)_p$ is injective and
			\item for some linear subspace $X_0 \subseteq X$ of full $\mu$-measure on which $p$ and $q$ are seminorms, the $(p+q)$-closure $\mathrm{cl}^{X_0}_{p+q} H(\mu)$ has full $\mu$-measure.
		\end{enumerate}
		Then, for any continuous and submultiplicative function $f : [0,\infty) \to [0,\infty)$ satisfying \cref{eq:superquad}, we have
		\begin{equation}
			\int_X \exp \left[ - f \left( p \left( x \right) \right) + q \left( x \right)^2 \right] \mathrm{d} \mu \left( x \right) < \infty \, . 
		\end{equation}
		Assume in addition that the natural map $\iota_\mu^{p+q} : H(\mu) \to H(\mu)_{p+q}$ admits a factorisation $\iota_\mu^{p+q} = k \circ j$ as in \cref{cor:GoodFactorisation} with the property that
		\begin{equation}\label{eq:evsumming}
			\sum_n \lambda_n^2 \ln \left( n + 1 \right) < \infty \, .
		\end{equation}
		where $(\lambda_n)$ denotes the (possibly finite) sequence of eigenvalues of $|j|$ sorted in descending order.
		Then, there exists an orthonormal basis $(e_n)$ of $H(\mu)$ such that
		\begin{equation}
			\begin{aligned}
				\lim_{n \to \mathrm{dim} \, H(\mu)} \int_X \exp &\left[ -f\left(p \left( x \right)\right) + \alpha q \left( x \right)^2 \right] \mathrm{d} \left( \mu \circ P_n^{-1} \right) \left( x \right) \\
				&=
				\int_X \exp \left[ -f\left(p \left( x \right)\right) + \alpha q \left( x \right)^2 \right] \mathrm{d} \mu \left( x \right)
				< \infty
			\end{aligned}
		\end{equation}
		for all $\alpha > 0$.
	\end{corollary}
	\begin{proof}
		The statement follows immediately by combining \cref{thm:FiniteDimensionalApproximation,thm:FiniteDimensionalConvergence,thm:sepBSextension}.
	\end{proof}
	\begin{remark}
		$\iota_\mu^{p+q}$ is trivially absolutely $2$-summing in the case that $H(\mu)$ is finite-dimensional.
	\end{remark}
	\begin{remark}
		If $p$ and $q$ are continuous, condition $(iii)$ is always true by \cref{lem:CMhilbert}.
		If in addition $X$ is a nuclear space, condition $(i)$ and \cref{eq:evsumming} are automatically satisfied.
	\end{remark}
	\begin{remark}
		In case $p+q$ is a sequentially continuous seminorm, the assumption $\mu(\mathrm{cl}^{X_0}_{p+q}H(\mu)) = 1$ is true for any linear subspace $X_0 \subseteq X$ of full measure on which $p$ and $q$ are seminorms and follows from \cref{thm:FiniteDimensionalApproximation}.
		Hence, this restriction can be viewed as a natural generalisation of sequential continuity.
	\end{remark}
	We conclude this \lcnamecref{sec:MeasurableSeminorms} with the promised \lcnamecref{ex2}.
	\begin{example}\label{ex2}
		Let $X=\mathbb{R}^{\mathbb{N}}$ and consider $\mu_\sigma$ as in \cref{exsimple} with $\sigma\in\ell^2$.
		Using that $X^*$ can be identified with the set of all finite sequences in $\mathbb{R}^{\mathbb N}$, one readily checks that
		$H(\mu_\sigma) = \big\{ x\in X : \sum_{n=1}^{\infty}\sigma_n^{-2} x_n^2 < \infty\big\}$.
		By the density of compactly supported sequences in $\ell^2$ and monotonicity of the $\ell^r$-norms, we find
		$\operatorname{cl}^{\ell^2}_{p_2+p_r}H(\mu) = \ell^2$ for any $r\ge 2$.
		The injectivity assumption follows from \cref{rem:Lr}.
		It remains to prove the absolute $2$-summability of the natural map $\iota_{\mu_\sigma}^{p_2+p_r} : H(\mu) \to H(\mu)_{p_2 + p_r}$.
		Let $(e_n)_{n\in\mathbb{N}}$ denote the usual orthonormal basis in $\ell^2$. Then, $(\sigma_n e_n)_{n\in\mathbb N}$ is an orthonormal basis of $H(\mu_\sigma)$.
		Hence, it follows that
		\[ 
		\sum_{k = 1}^\infty \left(p_2(\sigma_ne_n) + p_r(\sigma_ne_n)\right)^2
		\le
		4 \sum_{k = 1}^\infty p_2(\sigma_n e_n)^2
		\le
		4\sum_{k = 1}^\infty \sigma_n^2<\infty.
		\]
		Therefore, $\iota_{\mu_\sigma}^{p_2+p_r}$ is Hilbert--Schmidt and hence absolutely 2-summing. By \cref{finalcor}, this implies
		\begin{equation}
			\int_{\mathbb{R}^{\mathbb{N}}}
			\exp \left[ - p_r \left( x \right)^{2+\epsilon} + \alpha p_2 \left( x \right)^2 \right]
			\mathrm{d} \mu_\sigma \left( x \right)
			< \infty \, ,
		\end{equation}
		for all $\alpha, \epsilon > 0$.
	\end{example}
\subsection*{Acknowledgements}
B.H. and J.Z. thank Daan van Dijk and David Hasler for valuable discussions on the topic.
D.W.J. and J.Z. were supported by the Deutsche Forschungsgemeinschaft (DFG) under Grant No. 406116891 within the Research Training Group RTG 2522/1.
J.Z. was further supported by the DFG under Grant No. 398579334 (Gi328/9-1).
\appendix
\bibliographystyle{halpha-abbrv}
\bibliography{bibliography}
\end{document}